\newtheorem*{rep@theorem}{\rep@title}
\newcommand{\newreptheorem}[2]{%
\newenvironment{rep#1}[1]{%
 \def\rep@title{#2 \ref{##1}}%
 \begin{rep@theorem}}%
 {\end{rep@theorem}}}
\newtheorem{theorem}{Theorem}[section]
\newtheorem{lemma}{Lemma}[section]
\newtheorem{corollary}{Corollary}[section]
\newtheorem{conjecture}{Conjecture}[section]
\newtheorem{question}{Question}[section]
\theoremstyle{definition}
\newtheorem{definition}{Definition}[section]
\DeclareMathOperator{\SYT}{SYT}
\DeclareMathOperator{\ev}{ev}
\DeclareMathOperator{\rot}{rot}
\DeclareMathOperator{\comp}{comp}
\DeclareMathOperator{\rev}{rev}
\DeclareMathOperator{\Av}{Av}
\DeclareMathOperator{\SAv}{SAv}
\DeclareMathOperator{\PAv}{PAv}
\DeclareMathOperator{\ide}{id}
\begin{document}

\title[]{Pattern-Avoiding Permutation Powers} \keywords{}
\subjclass[2010]{}

\author[]{Amanda Burcroff}
\address[]{University of Michigan, 500 S. State St., Ann Arbor, MI 48109}
\email{burcroff@umich.edu}
\author[]{Colin Defant}
\address[]{Fine Hall, 304 Washington Rd., Princeton, NJ 08544}
\email{cdefant@princeton.edu}

\begin{abstract} 
Recently, B\'ona and Smith defined \emph{strong pattern avoidance}, saying that a permutation $\pi$ strongly avoids a pattern $\tau$ if $\pi$ and $\pi^2$ both avoid $\tau$. They conjectured that for every positive integer $k$, there is a permutation in $S_{k^3}$ that strongly avoids $123\cdots (k+1)$. We use the Robinson--Schensted--Knuth correspondence to settle this conjecture, showing that the number of such permutations is at least $k^{k^3/2+O(k^3/\log k)}$ and at most $k^{2k^3+O(k^3/\log k)}$. We enumerate $231$-avoiding permutations of order $3$, and we give two further enumerative results concerning strong pattern avoidance. We also consider permutations whose powers \emph{all} avoid a pattern $\tau$. Finally, we study subgroups of symmetric groups whose elements all avoid certain patterns. This leads to several new open problems connecting the group structures of symmetric groups with pattern avoidance.   
\end{abstract}
\maketitle

\section{Introduction}\label{Sec:Intro}

Consider $S_n$, the symmetric group on $n$ letters. This is the set of all permutations of the set $[n]=\{1,\ldots,n\}$, which we can view as bijections from $[n]$ to $[n]$. Many interesting questions arise when we view permutations as group elements. For example, we can ask about their cycle types, their orders, and their various powers. It is also common to view permutations as words by associating the bijection $\pi:[n]\to[n]$ with the word $\pi(1)\cdots\pi(n)$. We use this association to consider permutations as bijections and words interchangeably. This point of view allows us to consider the notion of a permutation pattern, which has spawned an enormous amount of research since its inception in the 1960's \cite{Bona, Kitaev, Linton}. Given $\pi=\pi(1)\cdots\pi(n)\in S_n$ and $\tau=\tau(1)\cdots\tau(m)\in S_m$, we say the entries $\pi(i_1),\ldots,\pi(i_m)$ form an occurrence of the pattern $\tau$ in $\pi$ if $i_1<\cdots<i_m$ and for all $j,k\in\{1,\ldots,m\}$, $\pi(i_j)<\pi(i_k)$ if and only if $\tau(j)<\tau(k)$. We say that $\pi$ \emph{avoids} $\tau$ if there does not exist an occurrence of $\tau$ in $\pi$. Let $\Av(\tau_1,\ldots,\tau_r)$ denote the set of all permutations that avoid the patterns $\tau_1,\ldots,\tau_r$. Let $\Av_n(\tau_1,\ldots,\tau_r)=\Av(\tau_1,\ldots,\tau_r)\cap S_n$. Let $\ide_n=123\cdots n$ denote the identity element of $S_n$. 

Recently, researchers have begun to explore interactions between permutation pattern avoidance and the group-theoretic properties of permutations \cite{Albert, AlbertTOTO, Archer, Atkinson1, Atkinson2, BonaCyclic, BonaSquares, Huang, Karpilovskij, Lehtonen1, Lehtonen2}. For example, a permutation is called \emph{cyclic} if it has exactly one cycle in its disjoint cycle decomposition. At the 2007 Permutation Patterns Conference, Stanley posed the problem of determining the number of cyclic permutations in $\Av_n(\tau)$. There are currently no known formulas for these numbers when $\tau$ has length at least $3$, but the articles \cite{Archer, BonaCyclic, Huang} do obtain several interesting results concerning cyclic permutations that avoid multiple patterns. 

Following a recent paper of B\'ona and Smith \cite{BonaSquares}, we say a permutation $\pi\in S_n$ \emph{strongly avoids a pattern $\tau$} if $\pi$ and $\pi^2$ both avoid $\tau$. For example, the permutation $2341$ strongly avoids $321$ because the permutations $2341$ and $(2341)^2=3412$ both avoid $321$. Let $\SAv(\tau_1,\ldots,\tau_r)$ denote the set of permutations that strongly avoid the patterns $\tau_1,\ldots,\tau_r$, and let $\SAv_n(\tau_1,\ldots,\tau_r)=\SAv(\tau_1,\ldots,\tau_r)\cap S_n$. 
B\'ona and Smith proved that $\SAv_n(\ide_{k+1})=\emptyset$ for all positive integers $n$ and $k$ with $n\geq k^3+1$. They also conjectured (see \cite[Conjecture 2.2]{BonaSquares}) that $\SAv_{k^3}(\ide_{k+1})\neq\emptyset$. The following theorem proves that this is indeed the case. In what follows, let $f^{\lambda_{p\times q}}$ denote the number of standard Young tableaux whose shape is the $p\times q$ rectangle.  

\begin{theorem}\label{Thm1}
For every positive integer $k$, we have \[\binom{\left\lfloor k^2/2\right\rfloor}{\left\lfloor k^2/4\right\rfloor}^k\left(f^{\lambda_{\lfloor k/2\rfloor\times\lceil k/2\rceil}}\right)^{2k}\leq \big|\SAv_{k^3}(\ide_{k+1})\big|\leq\left(f^{\lambda_{k^2\times k}}\right)^2.\]
\end{theorem}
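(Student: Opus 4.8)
The key idea is to understand when a permutation $\pi \in S_{k^3}$ has the property that $\pi$ and $\pi^2$ both avoid $\ide_{k+1}$. By the Erd\H{o}s--Szekeres theorem (or rather its consequence via the RSK correspondence), a permutation avoids $\ide_{k+1}$ if and only if its longest increasing subsequence has length at most $k$, which under RSK means the shape of its recording/insertion tableaux has at most $k$ columns. So the plan is to encode both $\pi$ and $\pi^2$ simultaneously via RSK-type data and count.

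\textbf{Upper bound.} Here I would argue as follows. Since $\pi$ avoids $\ide_{k+1}$, its RSK shape $\lambda$ has at most $k$ columns; since $|\pi| = k^3$ and $\lambda$ also has at most $k$ rows (because $\pi^{-1}$, which is $\pi$ applied to a square-root... wait, more carefully: we also know something about $\pi^2$). The cleaner route: Bóna--Smith's own argument shows that if $\pi \in \SAv_{n}(\ide_{k+1})$ then $n \le k^3$, and the proof essentially shows the shape $\lambda$ of $\pi$ under RSK must fit inside a $k^2 \times k$ box (at most $k$ columns from $\pi$ avoiding $\ide_{k+1}$, and at most $k^2$ rows because $\pi^2$ avoiding $\ide_{k+1}$ forces the longest decreasing... er, the number of rows of $\lambda$ is the longest decreasing subsequence of $\pi$, and one needs to bound that by $k^2$). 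Granting that $\lambda$ fits in the $k^2 \times k$ rectangle and that $|\lambda| = k^3$ forces $\lambda = \lambda_{k^2 \times k}$ exactly, a permutation $\pi$ of this shape is determined by its pair (insertion tableau $P$, recording tableau $Q$), each a standard Young tableau of shape $\lambda_{k^2\times k}$. Hence $|\SAv_{k^3}(\ide_{k+1})| \le (f^{\lambda_{k^2 \times k}})^2$, with no further use of the $\pi^2$ condition needed for the count. The main obstacle in this direction is making precise the claim that the shape must be exactly the full rectangle — this is where I expect to lean on (or reprove) the relevant part of Bóna--Smith.

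\textbf{Lower bound.} This is the harder and more interesting direction: I need to exhibit many permutations in $\SAv_{k^3}(\ide_{k+1})$. The natural construction is a ``block'' one. Think of $[k^3]$ split into $k$ consecutive intervals of size $k^2$, and build $\pi$ so that it permutes these blocks in a cyclic or structured way while acting nicely within blocks, so that both $\pi$ and $\pi^2$ decompose as unions of increasing runs of controlled length. Concretely, one wants $\pi$ to look like a direct-sum-type or inflation-type construction where the global increasing subsequences are forced to be short. I would choose $\pi$ to be (conjugate to) a block permutation whose ``outer'' pattern on the $k$ blocks contributes a bounded amount to any increasing subsequence, and within each block use a $312$-like or layered permutation of the $k^2$ points whose structure survives squaring. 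The count $\binom{\lfloor k^2/2\rfloor}{\lfloor k^2/4\rfloor}^k (f^{\lambda_{\lfloor k/2\rfloor \times \lceil k/2\rceil}})^{2k}$ strongly suggests the recipe: within each of the $k$ blocks we have $\binom{\lfloor k^2/2\rfloor}{\lfloor k^2/4\rfloor}$ choices (a lattice-path / subset choice splitting the block into two halves), and the factor $(f^{\lambda_{\lfloor k/2\rfloor \times \lceil k/2\rceil}})^{2k}$ looks like: $2$ tableaux (insertion and recording) of shape a $\lfloor k/2 \rfloor \times \lceil k/2 \rceil$ rectangle, contributed by $k$ independent sub-blocks — i.e., each block of size $k^2$ is itself built from sub-pieces of size roughly $k^2/2$ whose RSK shapes are $(\le k/2) \times (\le k/2)$ rectangles, keeping increasing subsequences within a half-block down to $k/2$ and hence the total over $\pi$ and the ``interleaving'' in $\pi^2$ down to $k$. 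So the construction is: pair up the $k$ blocks (or sub-blocks) into $\approx k/2$ two-block gadgets on which $\pi$ acts by swapping, so that squaring returns each gadget to a ``sorted'' state; on each gadget the freedom is a choice of two SYT of a $\lfloor k/2\rfloor \times \lceil k/2 \rceil$ shape plus a binary splitting datum.

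\textbf{Main obstacle.} The genuinely hard step is the lower bound: designing the explicit family and verifying that \emph{both} $\pi$ and $\pi^2$ avoid $\ide_{k+1}$. Avoidance for $\pi$ is usually transparent by construction, but $\pi^2$ mixes the blocks, and one must show no long increasing subsequence is created across the re-paired blocks. I expect this to require a careful RSK/Greene's-theorem bookkeeping argument: bound the longest increasing subsequence of $\pi^2$ by summing contributions from at most two ``interacting'' gadgets, each capped at $k/2$, to get the global bound $k$. Verifying the exact count (that distinct data give distinct permutations, and that all resulting permutations genuinely lie in $\SAv_{k^3}$) is then a matter of checking the construction is injective and the constraints are as claimed.
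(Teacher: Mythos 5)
Your upper bound plan matches the paper's: show the RSK shape of any $\pi\in\SAv_{k^3}(\ide_{k+1})$ must be exactly the $k^2\times k$ rectangle and invoke bijectivity of RSK. But the one nontrivial step there --- that $\pi$ has no decreasing subsequence of length $k^2+1$ --- is exactly the step you leave as ``lean on (or reprove) Bóna--Smith,'' so your write-up contains no actual argument for it. The paper supplies one: color the entries of $\pi$ with at most $k$ colors so that each color class is decreasing (possible by Greene's theorem since $P(\pi)$ has at most $k$ columns), take a maximum decreasing subsequence $\pi(i_1)>\cdots>\pi(i_m)$, and note that if $m\geq k^2+1$ then by pigeonhole some $k+1$ of the values $\pi^2(i_{j})$ share a color, hence decrease as the $\pi(i_j)$ do, which produces an occurrence of $\ide_{k+1}$ in $\pi^2$. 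You should include this (or an equivalent) argument rather than cite it as a black box.

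The genuine gap is in the lower bound. Your block picture ($k$ blocks of size $k^2$, paired so that squaring ``sorts'' each gadget) is the right silhouette, but your reverse-engineering of the constant is structurally wrong, and the ideas needed to actually prove the count are missing. The paper takes $\pi=\mu_1\ominus\cdots\ominus\mu_k$ with $\mu_i\in\Av_{k^2}(\ide_{k+1})$ and imposes $\mu_{k-i+1}\circ\mu_i=\delta_{k^2}$, so that $\pi^2=\bigoplus_{i=1}^k\delta_{k^2}$; thus avoidance by $\pi^2$ is automatic, not (as you suggest) the ``genuinely hard step'' requiring Greene's-theorem bookkeeping across gadgets. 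The real work is the enumeration: the pairing condition forces $\mu_i=\rot(\mu_{k-i+1})$ and hence $\mu_i=\comp(\rev(\mu_i))$, i.e.\ each block is reverse-complement invariant and lies in $\Av_{k^2}(\ide_{k+1},\delta_{k+1})$, so its RSK shape is the full $k\times k$ square (not, as you posit, sub-pieces of size $k^2/2$ with $(k/2)\times(k/2)$ shapes). Under RSK, rc-invariance translates into both tableaux being self-evacuating, and the factor $\binom{\lfloor k^2/2\rfloor}{\lfloor k^2/4\rfloor}\bigl(f^{\lambda_{\lfloor k/2\rfloor\times\lceil k/2\rceil}}\bigr)^2$ is the number of self-evacuating tableaux in $\SYT(\lambda_{k\times k})$, computed via Egge's bijection to pairs of tableaux on complementary subsets of $[k^2]$ with shapes $\lambda_{\lfloor k/2\rfloor\times\lceil k/2\rceil}$ and $\lambda_{\lceil k/2\rceil\times\lfloor k/2\rfloor}$. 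None of this --- the forced rc-invariance, the passage to self-evacuating tableaux, or the Egge/domino-type enumeration --- appears in your plan, and without it you cannot reach the stated lower bound.
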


The lower bound in Theorem \ref{Thm1} far surpasses the lower bound of $1$ that B\'ona and Smith conjectured. We will see in Section~\ref{Sec:Long} that 
\begin{equation}\label{Eq14}
\binom{\left\lfloor k^2/2\right\rfloor}{\left\lfloor k^2/4\right\rfloor}^k\left(f^{\lambda_{\lfloor k/2\rfloor\times\lceil k/2\rceil}}\right)^{2k}=k^{k^3/2+O(k^3/\log k)}\quad\text{and}\quad\left(f^{\lambda_{k^2\times k}}\right)^2=k^{2k^3+O(k^3/\log k)}.    
\end{equation} It is somewhat surprising that $\big|\SAv_{k^3}(\ide_{k+1})\big|$ is exponential in $k^3\log k$ since $\SAv_{k^3+1}(\ide_{k+1})$ is empty.

B\'ona and Smith \cite{BonaSquares} showed that $|\SAv_n(312)|=|\SAv_n(231)|$ and $|\SAv_n(132)|=|\SAv_n(213)|$. They determined $|\SAv_n(312)|$ exactly and found a lower bound for $|\SAv_n(321)|$. However, they were unable to compute $|\SAv_n(132)|$ explicitly. Motivated by an attempt to complete this enumeration, they asked for the number of permutations in $\Av_n(132)$ that have order $1$ or $3$. We have not answered this question, but we have proven the following theorem. Given a set $T$ of positive integers and a permutation pattern $\tau$, let $\Omega_n^T (\tau)$ denote the set of permutations $\pi\in \Av_n(\tau)$ such that the order of $\pi$ in the group $S_n$ is an element of $T$.      

\begin{theorem}\label{Thm2}
We have \[\sum_{n\geq 1}\big|\Omega_n^{\{1,3\}}(231)\big|x^n=\sum_{n\geq 1}\big|\Omega_n^{\{1,3\}}(312)\big|x^n=\frac{x+x^3+x^5}{1-x-3x^3-x^5}.\]
\end{theorem}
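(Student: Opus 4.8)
The plan is to reduce everything to the pattern $231$ and then to a generating-function identity for sum-indecomposable permutations. First, the map $\pi\mapsto\pi^{-1}$ preserves the order of a permutation in $S_n$ and turns occurrences of $231$ into occurrences of $231^{-1}=312$ and vice versa; hence it restricts to an order-preserving bijection $\Omega_n^{\{1,3\}}(231)\to\Omega_n^{\{1,3\}}(312)$, so the two series in the theorem coincide and it suffices to evaluate $\sum_{n\ge1}\big|\Omega_n^{\{1,3\}}(231)\big|x^n$. Next, recall that every permutation is uniquely a direct sum $\rho_1\oplus\cdots\oplus\rho_t$ of sum-indecomposable permutations (a permutation being sum-indecomposable if it is not a nontrivial direct sum); such a sum avoids $231$ if and only if every $\rho_i$ does, since a copy of $231$ cannot use two different blocks (its smallest entry comes last, but blocks with larger index carry larger values), and its cycle type is the multiset union of the cycle types of the $\rho_i$. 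Therefore $\pi$ has order $1$ or $3$ precisely when each of its sum-indecomposable summands does. Writing $a_m$ for the number of sum-indecomposable permutations in $\Av_m(231)$ all of whose cycles have length $1$ or $3$ (so $a_1=1$) and $A(x)=\sum_{m\ge1}a_mx^m$, we obtain
\[
\sum_{n\ge1}\big|\Omega_n^{\{1,3\}}(231)\big|x^n=\frac{1}{1-A(x)}-1=\frac{A(x)}{1-A(x)},
\]
and an elementary computation shows this equals $\dfrac{x+x^3+x^5}{1-x-3x^3-x^5}$ if and only if $A(x)=\dfrac{x+x^3+x^5}{1-2x^3}$, i.e.\ if and only if $a_1=1,\ a_2=0,\ a_3=1,\ a_4=2,\ a_5=1$ and $a_m=2a_{m-3}$ for every $m\ge6$. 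The five base values are checked directly (for instance the only such indecomposable of length $5$ is $52143$).

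The core of the proof is the recursion $a_m=2a_{m-3}$ for $m\ge6$, which I would obtain from a structural description of a sum-indecomposable $\gamma\in\Av_m(231)$ whose cycles all have length $1$ or $3$. Sum-indecomposability forces $\gamma(1)=m$; as $\gamma(1)\ne1$, the value $1$ lies in a $3$-cycle $(1,\,m,\,b)$, where $b=\gamma(m)$ and $\gamma(b)=1$. Since the minimum value $1$ sits at position $b$, $231$-avoidance forces $\gamma$ to be strictly decreasing on positions $1,\dots,b$ (an ascent before position $b$ would complete a $231$ with the entry $1$), so $b$ equals the length of the initial descending run of $\gamma$. I would then establish the dichotomy that $\gamma(2)=m-1$ whenever $m\ge6$: if instead the value $m-1$ appeared at a position $>b$, then comparing that entry with the first entry $m$ and the last entry $b$ and invoking $231$-avoidance would force $\gamma(2)=b-1$ and hence $\gamma$ to restrict to the reversal of $(1,2,\dots,b)$ on its first $b$ positions; the transpositions $j\leftrightarrow b+1-j$ this produces contradict the cycle hypothesis unless $b\le 3$, and a short case check then rules out $m\ge6$. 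Granting $\gamma(2)=m-1$, deleting the $3$-cycle $(1,m,b)$ leaves the restriction of $\gamma$ to the positions $\{2,\dots,m-1\}\setminus\{b\}$, a set that is also exactly the set of values it takes; standardizing this restriction to $[m-3]$ gives a permutation $\gamma^\flat$ that again avoids $231$, has all cycles of length $1$ or $3$, and is sum-indecomposable (its largest value $m-1$ sits in its first position, namely position $2$ of $\gamma$). Finally, if $b'$ denotes the last entry of $\gamma^\flat$, one checks that $b\in\{b'+1,\,b'+2\}$.

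For the converse I would run this construction backwards: given any sum-indecomposable $\gamma'\in\Av_{m-3}(231)$ with cycle lengths in $\{1,3\}$ and last entry $b'$, and a choice of $b\in\{b'+1,\,b'+2\}$, place an order-isomorphic copy of $\gamma'$ on the positions $\{2,\dots,m-1\}\setminus\{b\}$ and put $\gamma(1)=m$, $\gamma(b)=1$, $\gamma(m)=b$. Then $\gamma$ is sum-indecomposable, all of its cycles have length $1$ or $3$ (the new cycle $(1,m,b)$ has length $3$ and the remaining cycles are those of $\gamma'$), and $\gamma$ avoids $231$: the choice $b\le b'+2$ does not exceed two more than the length of the initial descending run of $\gamma'$, so positions $1,\dots,b$ of $\gamma$ remain decreasing, and one verifies directly that no other copy of $231$ can form. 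The forward and backward maps are mutually inverse, so for $m\ge6$ there are exactly $2a_{m-3}$ sum-indecomposable permutations in $\Av_m(231)$ with all cycle lengths in $\{1,3\}$; that is, $a_m=2a_{m-3}$, as required, and the theorem follows.

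I expect the main obstacle to be the structural analysis in the third paragraph: proving the dichotomy $\gamma(2)=m-1$ for $m\ge6$ (which is what makes $\gamma^\flat$ sum-indecomposable) and checking that exactly the two gluings $b=b'+1$ and $b=b'+2$ keep the reconstructed permutation $231$-avoiding. Both are elementary but require careful casework that must simultaneously track the pattern condition and the constraint that every cycle has length $1$ or $3$; getting this bookkeeping right, especially near the small values $m\le 5$ where the structure is genuinely different, is the delicate part.
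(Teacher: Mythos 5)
Your proposal is correct, and it reaches the generating function by a genuinely different route from the paper. You pass between $231$ and $312$ by inversion where the paper uses reverse complement (both work), and your factorization into sum-indecomposable blocks is essentially the same first step as the paper's, which splits a $312$-avoider at the position of the entry $1$ and writes the series as $B(x)/(1-B(x))$. The difference is in how the blocks are counted. The paper characterizes them explicitly: a block has exactly $M$ three-cycles and at most two fixed points, which can only occupy two prescribed positions, and after deleting the fixed points the block is forced to be $(\rot(\mu)\oplus\mu)\ominus\delta_M$ for some $\mu\in\Av_M(312,213)$; since $|\Av_M(312,213)|=2^{M-1}$, the block series $x+\frac{x^3(1+x)^2}{1-2x^3}$ — identical to your $A(x)=\frac{x+x^3+x^5}{1-2x^3}$ — is read off directly. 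You instead count blocks by the recursion $a_m=2a_{m-3}$, peeling off the $3$-cycle through the maximum. I checked the two claims you flag as delicate and both hold: for $m\geq 6$ one indeed has $\gamma(2)=m-1$ (a quick route: the cycle through position $2$ gives $\gamma(\gamma(m-1))=2$, and if $m-1$ sat at a position $>b$ then $231$-avoidance against the entries $m-1$ and $b=\gamma(m)$ forces the prefix to be $m,b-1,b-2,\ldots,1$, whose transpositions violate the cycle condition once $m\geq 6$); and exactly the gluings $b=b'+1,b'+2$ preserve $231$-avoidance, because the inserted entries $m$, $1$, $b$ can only serve as the ``2'', ``1'', ``1'' of a pattern respectively, and the values exceeding $b'$ in $\gamma'$ already occur in decreasing order. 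The paper's route yields more structure (each block is encoded by a binary word), while yours is more self-contained, trading the $|\Av_M(312,213)|=2^{M-1}$ input for the extension/restriction casework.
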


Additional motivation for the preceding theorem comes from the observation that the sets $\Omega_n^{\{t\}}(231)$ and $\Omega_n^{\{t\}}(312)$ have been enumerated when $t=1$ (this is trivial) and when $t=2$ (see \cite{Simion}). The next natural step concerns the enumeration of the sets $\Omega_n^{\{3\}}(231)$ and
$\Omega_n^{\{3\}}(312)$, which is immediate from Theorem \ref{Thm2}.

Although it still seems difficult to enumerate permutations that strongly avoid $\tau$ whenever $\tau\in\{132,213,321\}$, we can sometimes obtain exact formulas when we insist that our permutations strongly avoid an additional pattern of length $4$. 

\begin{theorem}\label{Thm3}
For every $n\geq 2$, we have \[|\SAv_n(132,3421)|=|\SAv_n(213,4312)|=2n^2-7n+8.\]
\end{theorem}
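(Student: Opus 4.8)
The argument splits into a symmetry reduction and a structural enumeration. For the symmetry, let $w_0\in S_n$ denote the reverse permutation $n(n-1)\cdots 1$ and consider the map $\rho\colon S_n\to S_n$, $\rho(\pi)=w_0\pi w_0$; on one-line notation this is the reverse-complement operation (reverse the word, then complement the values). Because $w_0^2=\ide_n$, we have $\rho(\pi^2)=w_0\pi^2w_0=\rho(\pi)^2$, so $\rho$ commutes with squaring; and because $\rho$ sends each occurrence of a pattern $\tau$ in $\pi$ to an occurrence of the reverse-complement of $\tau$ in $\rho(\pi)$, while $132$ and $213$ are mutual reverse-complements, as are $3421$ and $4312$, the involution $\rho$ restricts to a bijection from $\SAv_n(132,3421)$ onto $\SAv_n(213,4312)$. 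This yields the first equality, so it remains to prove $\big|\SAv_n(132,3421)\big|=2n^2-7n+8$.

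For the enumeration I would start from the recursive description of $132$-avoiding permutations: if $\pi\in\Av_n(132)$ and $\pi(p)=n$, then positions $1,\dots,p-1$ carry exactly the $p-1$ largest values as a $132$-avoiding word and positions $p+1,\dots,n$ carry exactly the $n-p$ smallest values as a $132$-avoiding word. Iterating this down the right-hand side exhibits every $\pi\in\Av_n(132)$ as a weakly decreasing staircase of $132$-avoiding blocks, data I would record explicitly. Imposing in addition that $\pi$ avoid $3421$ should leave only a bounded amount of nontrivial block structure. The decisive, and hardest, step is then to impose that $\pi^2$ avoid both $132$ and $3421$: I would translate a hypothetical occurrence of either pattern in $\pi^2$ into a statement about the blocks and the cycle structure of $\pi$, and use this to reduce the admissible permutations to a short, explicit list of one- and two-parameter families (for instance, permutations differing from the identity only inside a window of bounded width, together with finitely many further families and sporadic exceptions). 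Summing the sizes of these families, correcting for the boundedly many overlaps via inclusion--exclusion, and checking the first few values of $n$ to fix the low-order terms should give $2n^2-7n+8$; the hypothesis $n\ge 2$ is needed because the formula fails at $n=1$.

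The principal obstacle is exactly this simultaneous characterization. The conditions on $\pi$ alone interact cleanly with the left-to-right block decomposition, but squaring badly disrupts that decomposition, so controlling which triples and quadruples of positions can form a forbidden pattern in $\pi^2$ takes care. I expect the proof to proceed by a detailed case analysis organized by the number and sizes of the nontrivial blocks of $\pi$, showing that any configuration outside the claimed list creates a copy of $132$ or $3421$ in $\pi$ or in $\pi^2$. A secondary, purely bookkeeping difficulty is pinning down the exact coefficients of the quadratic, which is what the small-case check handles.
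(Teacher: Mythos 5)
Your reduction of the second count to the first is correct and is exactly what the paper does: reverse--complement is conjugation by $\delta_n=n(n-1)\cdots 1$, hence commutes with squaring and carries $\SAv_n(132,3421)$ bijectively onto $\SAv_n(213,4312)$ (note the reverse complement of $132$ is $213$, as you say). The hypothesis $n\ge 2$ is indeed only there because the formula fails at $n=1$.

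The enumeration itself, however, is a genuine gap rather than a proof. Everything after ``For the enumeration I would start\dots'' is a plan: you correctly identify that the hard step is controlling occurrences of $132$ and $3421$ in $\pi^2$, but you never carry out that step. No explicit list of families is produced, no argument is given that a hypothetical list is exhaustive, and the phrase ``should give $2n^2-7n+8$'' is doing all the work. In particular, checking small cases can only confirm a formula once you have \emph{proved} that the answer is a quadratic polynomial in $n$ (or satisfies some recurrence of bounded order); nothing in the proposal establishes that. For comparison, the paper conditions on the position $i$ of the entry $n$ in $\pi$, writing $\SAv_n(132,3421)=\bigcup_i X^{(i)}$, and analyzes each piece: $X^{(n)}$ is in bijection with $\SAv_{n-1}(132,3421)$; for $2\le i\le n-2$ one shows $X^{(i)}=\{\ide_i\ominus\ide_{n-i}\}$ is a single permutation (this is where avoidance in $\pi^2$ forces $\sigma=\ide_i$); and $X^{(n-1)}$ and $X^{(1)}$ are expressed in terms of the auxiliary quantity $|\SAv_m(132,231)|$, which is then shown to equal $m$ by a separate, similar analysis. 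This yields the recurrence $|\SAv_n(132,3421)|=|\SAv_{n-1}(132,3421)|+4n-9$ for $n\ge 4$, from which the closed form follows by induction. Your block-decomposition starting point is compatible with this, but without the case analysis that pins down each $X^{(i)}$ (or some equivalent complete structural classification), the proposal does not constitute a proof of the quadratic formula.
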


\begin{theorem}\label{Thm4}
We have \[1+\sum_{n\geq 1}|\SAv_n(321,3412)|x^n=\frac{1}{1-x-x^2-2x^3}.\]
\end{theorem}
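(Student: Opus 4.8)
The plan is to exploit the fact that both forbidden patterns, $321$ and $3412$, are sum-indecomposable, so that $\Av(321,3412)$ is closed under direct sums and a permutation lies in it if and only if each of its sum-components does. Since $(\beta\oplus\gamma)^2=\beta^2\oplus\gamma^2$, the same holds for $\SAv(321,3412)$: a permutation strongly avoids $321$ and $3412$ if and only if each of its sum-components does. Writing $d_m$ for the number of sum-indecomposable permutations in $\SAv_m(321,3412)$, the unique factorization of a permutation into sum-indecomposable components gives $1+\sum_{n\geq1}|\SAv_n(321,3412)|x^n=\bigl(1-\sum_{m\geq1}d_m x^m\bigr)^{-1}$, so the theorem is equivalent to the assertion that $d_1=1$, $d_2=1$, $d_3=2$, and $d_m=0$ for all $m\geq4$.

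The cases $m\leq3$ are handled by inspection: the sum-indecomposable permutations of size at most $3$ avoiding $321$ are $1$, $21$, $231$, and $312$, and one checks that $(21)^2=\ide_2$, $(231)^2=312$, and $(312)^2=231$ all avoid $321$ and $3412$. So the heart of the argument is to show that for $m\geq4$ every sum-indecomposable $\pi\in\Av_m(321,3412)$ has $\pi^2\notin\Av(321,3412)$.

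For this I would first prove a structural lemma pinning down the shape of a sum-indecomposable $\pi\in\Av_m(321,3412)$ with $m\geq3$. Let $a=\pi(1)$ and let $p$ be the position of the entry $1$. Because $\pi$ avoids $321$, every entry smaller than $a$ occurs to the right of position $1$, and these entries appear in increasing order; because $321$ and $3412$ are each their own inverse, $\Av(321,3412)$ is closed under inversion, and applying the previous observation to $\pi^{-1}$ shows that the entries lying to the left of the entry $1$ also appear in increasing order. Consequently, if we had both $a\geq3$ and $p\geq3$, then the entries in positions $1$ and $2$ (which are increasing, with $\pi(1)=a$ and $\pi(2)>a$) together with the entries $1$ and $2$ (which appear, in this order, at positions greater than $p\geq3$) would form an occurrence of $3412$. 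Hence $a=2$ or $p=2$; iterating the analysis on the part of $\pi$ past the leading block then exposes the full recursive structure of these permutations.

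Finally, using this structure I would, for each sum-indecomposable $\pi\in\Av_m(321,3412)$ with $m\geq4$, exhibit four positions whose images under $\pi^2$ form an occurrence of $321$ or of $3412$, proceeding by induction on $m$ according to the two cases $a=2$ and $p=2$; the base case $m=4$ is the check that the sum-indecomposable members $2341$, $2413$, $3142$, $4123$ of $\Av_4(321,3412)$ square to $3412$, $4321$, $4321$, and $3412$. This last step — tracking a forbidden pattern through squaring, uniformly over the exponentially many permutations allowed by the structural lemma — is where I expect the real difficulty to lie; once $d_m=0$ is established for $m\geq4$, the generating-function identity follows immediately from the direct-sum decomposition.
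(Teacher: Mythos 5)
Your reduction is valid and is genuinely different from the paper's route: since $321$ and $3412$ are both sum indecomposable and $(\beta\oplus\gamma)^2=\beta^2\oplus\gamma^2$, membership in $\SAv(321,3412)$ is indeed decided componentwise, so the theorem is equivalent to the statement that the sum-indecomposable strong avoiders are exactly $1$, $21$, $231$, $312$ (giving the denominator $1-x-x^2-2x^3$). Your verification that $d_1=1$, $d_2=1$, $d_3=2$, $d_4=0$ is correct, and the observation that $a=2$ or $p=2$ for any sum-indecomposable $321$-and-$3412$-avoider is also correct. This is an attractive framing because it explains the shape of the generating function conceptually, whereas the paper never isolates the indecomposables: it instead shows that for $\pi\in\SAv_n(321,3412)$ with $n\geq 4$ the entry $n$ must sit in one of the last three positions (and, by inverse-closure, $\pi(n)\in\{n-2,n-1,n\}$), and peels off the last one, two, or three entries to get the linear recurrence $a_n=a_{n-1}+a_{n-2}+2a_{n-3}$ directly.

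However, there is a genuine gap: the entire content of the theorem, in your framework, is the claim that $d_m=0$ for all $m\geq 4$, i.e., that the square of \emph{every} sum-indecomposable permutation in $\Av_m(321,3412)$ with $m\geq 5$ contains $321$ or $3412$. You verify this only at $m=4$ and otherwise describe a program (``iterating the analysis\ldots exposes the full recursive structure,'' then ``exhibit four positions\ldots by induction''), explicitly flagging it as the step where the real difficulty lies. Since there are $2^{m-2}$ sum-indecomposable permutations in $\Av_m(321,3412)$, this is not a finite check, and no actual induction step is given: you do not say how the occurrence of $321$ or $3412$ in $\pi^2$ is produced from the one in $(\pi')^2$ for the smaller permutation $\pi'$ in either of the cases $a=2$ or $p=2$, nor even what the precise recursive structure is. The statement you need is true (it is equivalent to the paper's recurrence), and your plan is plausible, but as written the proof is missing its central argument; to complete it you would need to carry out work comparable to the paper's case analysis of $X^{(n-1)}\cap Y^{(n-1)}$, $X^{(n-1)}\cap Y^{(n-2)}$, etc., just organized around indecomposables instead of around the positions of $n$ and $\pi(n)$.
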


As far as we can see, there is no reason to limit our attention to only permutations and their squares. It is natural to consider pattern avoidance in \emph{all} of the powers of a permutation. 

\begin{definition}\label{Def1}
We say a permutation $\pi\in S_n$ \emph{powerfully avoids a pattern $\tau$} if every power of $\pi$ avoids $\tau$. Let $\PAv(\tau_1,\ldots,\tau_r)$ denote the set of permutations that powerfully avoid the patterns $\tau_1,\ldots,\tau_r$, and let $\PAv_n(\tau_1,\ldots,\tau_r)=\PAv(\tau_1,\ldots,\tau_r)\cap S_n$. 
\end{definition}

Fix a permutation pattern $\tau$. Since powerful pattern avoidance is evidently a very stringent condition, one might naturally ask if there even exist arbitrarily long permutations that strongly avoid $\tau$. Of course, such a permutation exists if and only if $\tau$ is not an identity permutation. Indeed, if $\tau$ is not an identity permutation, then for every $n\geq 1$, the identity permutation in $S_n$ powerfully avoids $\tau$. On the other hand, if $n\geq k$, then $\PAv_n(\ide_k)$ is empty because every permutation in $S_n$ has $\ide_n$ as a power. To make this discussion nontrivial, we ask if there exist permutations \emph{of arbitrary order} that powerfully avoid $\tau$. A more refined question is as follows. What is the set of positive integers $r$ such that there exists a permutation of order $r$ that powerfully avoids $\tau$? Let us denote this set by $\Xi(\tau)$. The following theorem answers this question for most patterns $\tau$. 

\begin{theorem}\label{Thm5}
Fix $m\geq 2$ and $\tau\in S_m$. If $\tau=\ide_m$, then $\Xi(\tau)$ is equal to the set of all orders of elements of $S_{m-1}$. If $\tau\not\in\{\ide_m,m123\cdots(m-1),234\cdots m1\}$, then $\Xi(\tau)=\mathbb N$. We also have $\Xi(231)=\Xi(312)=\{1,2\}$.  
\end{theorem}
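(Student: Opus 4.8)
The plan is to establish the three assertions of Theorem~\ref{Thm5} in order of increasing difficulty.

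\emph{The case $\tau=\ide_m$.} As already noted, $\PAv_n(\ide_m)=\emptyset$ whenever $n\geq m$, since $\ide_n$ is a power of every element of $S_n$ and contains $\ide_m$. Hence any permutation powerfully avoiding $\ide_m$ lies in some $S_n$ with $n\leq m-1$, and so in $S_{m-1}$; conversely, every $\sigma\in S_{m-1}$ powerfully avoids $\ide_m$, because each of its powers lies in $S_{m-1}$ and therefore has no increasing subsequence of length $m$. Thus $\Xi(\ide_m)$ equals the set of orders of elements of $S_{m-1}$.

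\emph{The case $\Xi(231)=\Xi(312)=\{1,2\}$.} One has $\Xi(231)=\Xi(312)$ because $\pi$ powerfully avoids $231$ iff $\pi^{-1}$ powerfully avoids $312$ (the powers of $\pi^{-1}$ are the powers of $\pi$, and a permutation avoids $231$ iff its inverse avoids $312$), and $\pi,\pi^{-1}$ have equal order. Clearly $1,2\in\Xi(231)$, witnessed by $\ide_1$ and by the transposition $21$, whose powers all avoid $231$. For the reverse inclusion I would show that no permutation of order $\geq 3$ powerfully avoids $231$: such a permutation has a cycle of length $\ell\geq 3$, and restricting to its support and standardizing yields an $\ell$-cycle $\rho\in S_\ell$ each power of which avoids $231$. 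The crux is the claim that \emph{a $231$-avoiding $\ell$-cycle $\rho\in S_\ell$ satisfies $\rho(1)=\ell$}: if the largest entry $\ell$ sits at position $k$, then no entry left of $\ell$ may exceed an entry right of $\ell$ (that pair, with $\ell$ between them, would form a $231$), which forces positions $1,\dots,k-1$ to carry precisely the values $1,\dots,k-1$, making $\{1,\dots,k-1\}$ a $\rho$-invariant set, so $k=1$ since $\rho$ is a single cycle. Applying the claim to both $\rho$ and $\rho^{-1}=\rho^{\ell-1}$ (again an $\ell$-cycle, and again the standardization of a power of the original permutation, hence $231$-avoiding) gives $\rho(1)=\ell$ and $\rho(\ell)=1$, so $\{1,\ell\}$ is a cycle of $\rho$, contradicting $\ell\geq 3$. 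Hence $\Xi(231)=\{1,2\}$.

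\emph{The case $\tau\notin\{\ide_m,\,m12\cdots(m-1),\,23\cdots m1\}$.} Every power of the $N$-cycle $C_N=23\cdots N1$ is a union of two increasing runs, so each of its patterns is a cyclic rotation of an identity permutation; consequently, if $\tau$ is not a cyclic rotation of an identity (note that $\ide_m$ is one), then $C_N$ powerfully avoids $\tau$ for every $N$, and since $C_N$ has order $N$ we conclude $\Xi(\tau)=\mathbb{N}$. It remains to treat $\tau=C_m^i$ with $2\leq i\leq m-2$, i.e.\ the rotations of $\ide_m$ other than $\ide_m$ itself, $23\cdots m1$, and $m12\cdots(m-1)$. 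Writing $C_m^i=\ide_{m-i}\ominus\ide_i$ with $m-i\geq 2$ and $i\geq 2$, we see that $3412=\ide_2\ominus\ide_2$ occurs in $C_m^i$, so $\Av(3412)\subseteq\Av(C_m^i)$ and it suffices to prove $\Xi(3412)=\mathbb{N}$. Since $3412$ is sum-indecomposable, a direct sum $\pi=\pi_1\oplus\cdots\oplus\pi_k$ powerfully avoids $3412$ iff every $\pi_s$ does, while the order of $\pi$ is the least common multiple of the orders of the $\pi_s$. It is therefore enough to produce, for each prime power $q$, a $q$-cycle $\rho_q\in S_q$ all of whose powers avoid $3412$; a permutation of any prescribed order $r$ is then obtained by direct-summing such cycles over the prime-power factors of $r$.

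This last construction is the main obstacle I expect. The cases $q\leq 3$ are trivial by length and $q=4$ is handled by $\rho_4=2413$; for larger prime powers one wants an explicit infinite family (the examples $\rho_5=31452$ and $\rho_7=4125673$ are suggestive) designed so that in every power of $\rho_q$ the small entries never form an ascending pair lying simultaneously below and to the right of an ascending pair of large entries, which is exactly what must happen to produce a copy of $3412=\ide_2\ominus\ide_2$. Verifying that this property is inherited by all powers of the chosen family is the technical heart of the argument; here the fact that $3412$ is an involution (so a permutation avoids $3412$ precisely when its inverse does) reduces the check to the powers $\rho_q^1,\dots,\rho_q^{\lfloor q/2\rfloor}$.
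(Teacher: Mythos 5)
Your reductions are sound and largely parallel the paper's: the identity case is handled the same way, and your reduction of the generic case to showing $\Xi(3412)=\mathbb N$ (all nontrivial powers of $23\cdots N1$ are skew sums of two identities, so the only remaining patterns are $\ide_{m-i}\ominus\ide_i$ with $i,m-i\geq 2$, each of which contains $3412$) is exactly the paper's route. Your argument that $\Xi(231)=\{1,2\}$ is a correct and genuinely different alternative: you pass to a single long cycle and show that a $231$-avoiding $\ell$-cycle must send $1\mapsto\ell$, then apply the same claim to its inverse (also a power, hence also $231$-avoiding) to get $\ell\mapsto 1$ and a contradiction for $\ell\geq 3$; the paper instead observes that $\pi$ and $\pi^{-1}$ both avoiding $231$ forces $\pi$ to avoid $231$ and $312$ simultaneously, hence to be layered, hence an involution.

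There is, however, a genuine gap at the point you yourself flag as ``the technical heart'': you never produce, for general $q$, a permutation of order $q$ all of whose powers avoid $3412$, and without that the claim $\Xi(3412)=\mathbb N$ --- and with it the main clause of the theorem --- remains unproved. Isolated examples for $q\leq 7$ together with the (correct) reduction to prime-power orders via direct sums do not substitute for an actual family and a verification that is uniform in $q$. For comparison, the paper exhibits for each $n$ the explicit $n$-cycle $\pi=(m+1)1\,2\cdots(m-1)(m+2)(m+3)\cdots n\,m\in S_n$ with $m=\lfloor n/2\rfloor$, writes $\pi^k$ for $1\leq k\leq m$ in closed form as a concatenation of a decreasing block, two increasing blocks, and a final decreasing block, checks directly that every such word avoids $3412$, and obtains the remaining powers for free because $3412$ is an involution (a point you also note). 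Some explicit family with a controllable closed form for its powers is unavoidable here; your $\rho_5=31452$ and $\rho_7=4125673$ do not visibly extend to one, so this step must be supplied before the proof is complete.
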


Consider the cyclic group $\mathbb Z/r\mathbb Z$. Note that $r\in\Xi(\tau)$ if and only if there is a positive integer $n$ and an injective homomorphism $\varphi: \mathbb Z/r\mathbb Z\hookrightarrow S_n$ such that $\varphi(\mathbb Z/r\mathbb Z)\subseteq\Av_n(\tau)$. This leads us to the notion of pattern avoidance in subgroups of symmetric groups. When we speak about groups, we really mean isomorphism classes of groups. This allows us to speak about sets of groups without infringing upon set-theoretic paradoxes. 

\begin{definition}\label{Def2}
Given a permutation pattern $\tau$, let $\mathcal G(\tau)$ be the set of groups $G$ such that there exists a positive integer $n$ and an injective homomorphism $\varphi: G\hookrightarrow S_n$ with $\varphi(G)\subseteq\Av_n(\tau)$. 
\end{definition}

We will obtain the following result as a simple consequence of Theorem \ref{Thm5}. See Section \ref{Sec:Preliminaries} for the definition of a sum indecomposable permutation. 

\begin{corollary}\label{Cor2}
The sets $\mathcal G(231)$ and $\mathcal G(312)$ are both equal to the set of elementary abelian $2$-groups. If $m\geq 2$ and $\tau\in S_m\setminus\{m123\cdots(m-1),234\cdots m1\}$ is sum indecomposable, then $\mathcal G(\tau)$ contains all abelian groups.  
\end{corollary}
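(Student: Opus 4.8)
The plan is to derive Corollary \ref{Cor2} from Theorem \ref{Thm5} by translating statements about the orders of powerfully-avoiding permutations into statements about embeddable groups. For the first claim, fix $\tau \in \{231, 312\}$. I would first argue that $\mathcal G(\tau)$ contains every elementary abelian $2$-group. The key observation is that if $\sigma \in S_a$ and $\pi \in S_b$ each powerfully avoid $\tau$, then the direct sum $\sigma \oplus \pi \in S_{a+b}$ also powerfully avoids $\tau$: its powers are $\sigma^j \oplus \pi^j$, and since $\tau$ has length $3$ while each block individually avoids $\tau$, any occurrence of $\tau$ in $\sigma^j \oplus \pi^j$ would have to use entries from both blocks, but $231$ and $312$ each begin or end with their largest entry in a way that forces all of a putative occurrence's ``large, late'' or ``large, early'' structure into one block — I would check the short case analysis that $\tau$ cannot straddle two summands of a direct sum. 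Granting this, Theorem \ref{Thm5} gives a permutation $g \in S_2$ of order $2$ powerfully avoiding $\tau$ (namely $21$, since $\Xi(\tau) = \{1,2\}$), and taking direct sums of $d$ copies of $21$ yields an embedding of $(\mathbb Z/2\mathbb Z)^d$ into $S_{2d}$ whose image lies in $\Av_{2d}(\tau)$; here I use that the image of such a direct-sum construction is exactly the group generated by the $d$ commuting involutions, which is $(\mathbb Z/2\mathbb Z)^d$.

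For the reverse inclusion — that $\mathcal G(\tau)$ contains \emph{only} elementary abelian $2$-groups when $\tau \in \{231,312\}$ — I would use the remark (implicit in the setup before Definition \ref{Def2}) that if $G \in \mathcal G(\tau)$ then every cyclic subgroup of $G$ embeds in some $S_n$ with image in $\Av_n(\tau)$, hence every element order of $G$ lies in $\Xi(\tau) = \{1,2\}$. A group in which every element has order dividing $2$ is abelian, and a finitely generated (indeed arbitrary) abelian group of exponent $2$ is an elementary abelian $2$-group, so $G$ must be elementary abelian. Conversely any such $G$ is realized by the direct-sum construction above (for infinite-rank $G$ one still only needs finite $n$? — no: $\mathcal G(\tau)$ as defined requires a \emph{finite} $n$, so only finite elementary abelian $2$-groups qualify; I would be careful to state ``finite elementary abelian $2$-groups'' or note that Definition \ref{Def2} forces finiteness, matching the phrasing in the corollary). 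This settles $\mathcal G(231) = \mathcal G(312)$ exactly.

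For the second claim, fix $m \ge 2$ and a sum indecomposable $\tau \in S_m \setminus \{m123\cdots(m-1), 234\cdots m1\}$, and let $A$ be an arbitrary (finite) abelian group, say $A \cong \mathbb Z/r_1\mathbb Z \times \cdots \times \mathbb Z/r_t\mathbb Z$. By Theorem \ref{Thm5}, since $\tau \notin \{\ide_m, m123\cdots(m-1), 234\cdots m1\}$ we have $\Xi(\tau) = \mathbb N$, so for each $i$ there is a permutation $g_i$ of order $r_i$ that powerfully avoids $\tau$. The crucial point is now a direct-sum lemma valid for \emph{sum indecomposable} $\tau$ of \emph{any} length: if $\sigma$ and $\pi$ each powerfully avoid a sum indecomposable pattern $\tau$, then $\sigma \oplus \pi$ does too, because an occurrence of a sum indecomposable $\tau$ in a direct sum $\alpha \oplus \beta$ must lie entirely within $\alpha$ or entirely within $\beta$ (a sum indecomposable pattern cannot be split across the blocks of a direct sum — this is exactly why sum indecomposability is the hypothesis). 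Applying this, $g_1 \oplus \cdots \oplus g_t$ powerfully avoids $\tau$ and generates a subgroup of the appropriate symmetric group isomorphic to $\langle g_1\rangle \times \cdots \times \langle g_t\rangle \cong A$ all of whose elements avoid $\tau$, so $A \in \mathcal G(\tau)$.

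The main obstacle is the direct-sum stability lemma: verifying that powerfully (and plainly) avoiding a sum indecomposable pattern is preserved under $\oplus$, i.e.\ that a sum indecomposable pattern occurring in $\alpha \oplus \beta$ must be confined to a single summand. This requires recalling the definition of sum indecomposability (a permutation $\tau$ is sum indecomposable if it is not the direct sum of two nonempty permutations) and arguing: given an occurrence of $\tau$ using some entries from the $\alpha$-block and some from the $\beta$-block, the $\alpha$-entries are all smaller and all to the left of the $\beta$-entries, so they exhibit $\tau$ as a direct sum of the sub-pattern on the $\alpha$-part and the sub-pattern on the $\beta$-part, contradicting indecomposability (this already handles arbitrary length, and $\tau = 231, 312$ are themselves sum indecomposable, covering the first claim uniformly). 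I expect this lemma is either already available in Section \ref{Sec:Preliminaries} or is a one-line observation; everything else is bookkeeping with Theorem \ref{Thm5} and elementary group theory.
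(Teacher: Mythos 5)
Your overall route is the same as the paper's: the paper's proof also rests on exactly the ``direct-sum stability'' observation you single out as the main obstacle --- namely that if $\tau$ is sum indecomposable, then an occurrence of $\tau$ in $\sigma\oplus\mu$ would split $\tau$ as a sum of two nonempty patterns, so $\mathcal G(\tau)$ is closed under direct products via $(x_1,x_2)\mapsto\varphi_1(x_1)\oplus\varphi_2(x_2)$ --- combined with Theorem \ref{Thm5} to get the element orders ($\Xi(231)=\Xi(312)=\{1,2\}$ for the first claim, $\Xi(\tau)=\mathbb N$ hence all finite cyclic groups for the second) and standard group theory (exponent-$2$ groups are elementary abelian; the Fundamental Theorem of Finitely Generated Abelian Groups). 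Your finiteness worry is resolved by the paper's convention that an elementary abelian $2$-group is a product of \emph{finitely many} copies of $\mathbb Z/2\mathbb Z$, and by the fact that membership in $\mathcal G(\tau)$ forces $G$ to embed in a finite symmetric group.

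There is one genuine misstep in your second paragraph about general abelian groups: you assert that the single permutation $g_1\oplus\cdots\oplus g_t$ ``generates a subgroup \ldots isomorphic to $\langle g_1\rangle\times\cdots\times\langle g_t\rangle$.'' A single permutation generates a cyclic group, here of order $\operatorname{lcm}(r_1,\ldots,r_t)$, which is not isomorphic to $\mathbb Z/r_1\mathbb Z\times\cdots\times\mathbb Z/r_t\mathbb Z$ unless the $r_i$ are pairwise coprime (e.g.\ for $A=\mathbb Z/2\mathbb Z\times\mathbb Z/2\mathbb Z$ your element has order $2$). What you need --- and what you in fact do correctly in the elementary abelian case --- is the image of the injective homomorphism $(x_1,\ldots,x_t)\mapsto\varphi_1(x_1)\oplus\cdots\oplus\varphi_t(x_t)$, equivalently the group generated by the $t$ permutations that act as $g_i$ on the $i$th block and as the identity elsewhere. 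Every element of that image has the form $g_1^{j_1}\oplus\cdots\oplus g_t^{j_t}$ with independent exponents, and each such permutation avoids $\tau$ by the same sum-indecomposability argument since each $g_i$ powerfully avoids $\tau$. With that one-line correction your proof coincides with the paper's.
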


\section{Preliminaries and Notation}\label{Sec:Preliminaries}

The \emph{plot} of a permutation $\pi=\pi(1)\cdots\pi(n)\in S_n$ is the graph displaying the points $(i,\pi(i))$ for all $i\in[n]$. The \emph{reverse} of $\pi$ is the permutation $\rev(\pi)=\pi(n)\cdots\pi(1)$. The \emph{complement} of $\pi$ is the permutation $\comp(\pi)=(n+1-\pi(1))\cdots(n+1-\pi(n))$. The \emph{inverse} of $\pi$, denoted $\pi^{-1}$, is just the inverse of $\pi$ in the group $S_n$. The plots of the reverse, complement, and inverse of $\pi$ are obtained by reflecting the plot of $\pi$ across the lines $x=\frac{n+1}{2}$, $y=\frac{n+1}{2}$, and $y=x$, respectively. We call $\comp(\rev(\pi))$ the \emph{reverse complement} of $\pi$. Let $\rot(\pi)$ be the permutation whose plot is obtained by rotating the plot of $\pi$ by $90^\circ$ counterclockwise. It is straightforward to check that $\rot(\pi)=\rev(\pi^{-1})$ and that $\rot(\rot(\pi))=\comp(\rev(\pi))$. 

Let $\delta_n=n\cdots 321$ be the reverse of the identity element $\ide_n=123\cdots n$. One can check that $\rev(\pi)=\pi\circ\delta_n$ and $\comp(\pi)=\delta_n\circ\pi$, respectively, where the symbol $\circ$ represents the product in the group $S_n$ (it is just the composition of bijections). This 
means that the reverse complement of $\pi$ is $\delta_n\circ\pi\circ\delta_n$, which is a conjugate of $\pi$ in $S_n$ because $\delta_n=\delta_n^{-1}$. It follows that $\comp(\rev(\pi^k))=\comp(\rev(\pi))^k$ for each positive integer $k$. Consequently, if $\tau_1',\ldots,\tau_r'$ are the reverse complements of the patterns $\tau_1,\ldots,\tau_r$, then \[|\Av_n(\tau_1,\ldots,\tau_r)|=|\Av_n(\tau_1',\ldots,\tau_r')|,\quad |\SAv_n(\tau_1,\ldots,\tau_r)|=|\SAv_n(\tau_1',\ldots,\tau_r')|,\] \[ \text{and}\quad |\PAv_n(\tau_1,\ldots,\tau_r)|=|\PAv_n(\tau_1',\ldots,\tau_r')|\quad\text{for every }n\geq 1.\] For example, $|\PAv_n(231)|=|\PAv_n(312)|$ and $|\PAv_n(132)|=|\PAv_n(213)|$ for every $n\geq 1$. 

Given $\sigma\in S_\ell$ and $\mu\in S_m$, let $\sigma\oplus\mu$ denote the \emph{sum} of $\sigma$ and $\mu$. This is the permutation whose plot is obtained by placing the plot of $\mu$ above and to the right of the plot of $\sigma$. The \emph{skew sum} of $\sigma$ and $\mu$, denoted $\sigma\ominus\mu$, is the permutation whose plot is obtained by placing the plot of $\mu$ below and to the right of the plot of $\sigma$. More precisely, we have \[(\sigma\oplus\mu)(i)=\begin{cases} \sigma(i) & \mbox{if } 1\leq i\leq \ell; \\ \mu(i-\ell)+\ell & \mbox{if } \ell+1\leq i\leq \ell+m \end{cases}\] and \[(\sigma\ominus\mu)(i)=\begin{cases} \sigma(i)+m & \mbox{if } 1\leq i\leq \ell; \\ \mu(i-\ell) & \mbox{if } \ell+1\leq i\leq \ell+m. \end{cases}\] 
We always have $\sigma^k\oplus\mu^k=(\sigma\oplus\mu)^k$, but the analogous statement with the sum replaced by a skew sum is false in general. A permutation is called \emph{sum indecomposable} if it cannot be written as the sum of two smaller permutations. 

We make the convention that $S_0=\{\varepsilon\}$, where $\varepsilon$ is the empty permutation. For the sake of convenience, we give $S_0$ the trivial group structure. Thus, $\varepsilon$ has order $1$. 

\section{Long Permutations that Strongly Avoid \texorpdfstring{$\ide_{k+1}$}{idk+1}}\label{Sec:Long}

We begin this section by proving the lower bound in Theorem \ref{Thm1}. Suppose $\mu_1,\ldots,\mu_k\in\Av_{k^2}(\ide_{k+1})$ are such that $\mu_{k-i+1}\circ\mu_i=\delta_{k^2}$ for all $i\in[k]$. The permutation $\pi=\mu_1\ominus\cdots\ominus\mu_k$ of length $k^3$ avoids $\ide_{k+1}$. In fact, $\pi$ strongly avoids $\ide_{k+1}$ because  \[\pi^2=\bigoplus_{i=1}^k(\mu_{k-i+1}\circ\mu_i)=\bigoplus_{i=1}^k\delta_{k^2}.\] Our goal is to determine the number of ways to choose the permutations $\mu_1,\ldots,\mu_k$ with these properties. Let \[a(k)=|\{\mu\in\Av_{k^2}(\ide_{k+1},\delta_{k+1}):\mu=\comp(\rev(\mu))\}|\] and \[ a'(k)=|\{\mu\in\Av_{k^2}(\ide_{k+1},\delta_{k+1}):\mu=\rot(\mu)\}|.\]

\begin{lemma}\label{Lem1}
Preserving the above notation, we have \[|\SAv_{k^3}(\ide_{k+1})|\geq\begin{cases} a(k)^{k/2}, & \mbox{if } k\equiv 0\pmod 2; \\ a(k)^{(k-1)/2}a'(k), & \mbox{if } k\equiv 1\pmod 2. \end{cases} \]  
\end{lemma}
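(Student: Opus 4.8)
The plan is to count the tuples $(\mu_1,\dots,\mu_k)$ appearing in the hypothesis directly, and then observe that each produces a distinct element of $\SAv_{k^3}(\ide_{k+1})$. First I would note that the assignment $(\mu_1,\dots,\mu_k)\mapsto\pi=\mu_1\ominus\cdots\ominus\mu_k$ is injective, since from $\pi\in S_{k^3}$ one reads off $\mu_j$ as the pattern occupying positions $(j-1)k^2+1,\dots,jk^2$ of $\pi$. By the discussion preceding the lemma, every tuple with $\mu_i\in\Av_{k^2}(\ide_{k+1})$ and $\mu_{k-i+1}\circ\mu_i=\delta_{k^2}$ for all $i\in[k]$ yields such a $\pi$ in $\SAv_{k^3}(\ide_{k+1})$, so it suffices to bound from below the number $N$ of such tuples. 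The defining relations couple only the index $i$ with the index $k+1-i$, so I would group $[k]$ into the pairs $\{i,k+1-i\}$ (with a single fixed point $i_0=(k+1)/2$ when $k$ is odd), observe that distinct pairs impose no joint constraints, and hence get $N$ as the product over these pairs of the number of admissible fillings of the corresponding slot(s).

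Next, for an off-diagonal pair $\{i,k+1-i\}$ with $i\ne k+1-i$, the relevant constraints are $\mu_{k+1-i}\circ\mu_i=\delta_{k^2}$, $\mu_i\circ\mu_{k+1-i}=\delta_{k^2}$, and $\mu_i,\mu_{k+1-i}\in\Av_{k^2}(\ide_{k+1})$. I would solve the first relation as $\mu_{k+1-i}=\delta_{k^2}\circ\mu_i^{-1}=\comp(\mu_i^{-1})$ and substitute into the second, obtaining $\mu_i\circ\delta_{k^2}=\delta_{k^2}\circ\mu_i$; since $\comp(\rev(\mu))=\delta_{k^2}\circ\mu\circ\delta_{k^2}$ and $\delta_{k^2}^2=\ide_{k^2}$, this is precisely $\mu_i=\comp(\rev(\mu_i))$. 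Then, because complementation carries occurrences of $\ide_{k+1}$ to occurrences of $\delta_{k+1}$ and inversion fixes $\delta_{k+1}$, the condition that $\mu_{k+1-i}=\comp(\mu_i^{-1})$ avoid $\ide_{k+1}$ is equivalent to $\mu_i$ avoiding $\delta_{k+1}$. Hence the admissible fillings of this pair are in bijection, via $\mu_i$ (with $\mu_{k+1-i}$ then forced), with $\{\mu\in\Av_{k^2}(\ide_{k+1},\delta_{k+1}):\mu=\comp(\rev(\mu))\}$, a set of size $a(k)$.

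For the fixed point $i_0$ (when $k$ is odd), the constraint becomes $\mu_{i_0}\circ\mu_{i_0}=\delta_{k^2}$ together with $\mu_{i_0}\in\Av_{k^2}(\ide_{k+1})$. Since $\rot(\mu)=\rev(\mu^{-1})=\mu^{-1}\circ\delta_{k^2}$, the relation $\mu_{i_0}\circ\mu_{i_0}=\delta_{k^2}$ is equivalent to $\mu_{i_0}=\rot(\mu_{i_0})$. I would further note that $\rot$ sends $\ide_{k+1}$ to $\delta_{k+1}$, so a $\rot$-fixed permutation avoids $\ide_{k+1}$ if and only if it avoids $\delta_{k+1}$; thus the admissible $\mu_{i_0}$ are exactly the elements of $\{\mu\in\Av_{k^2}(\ide_{k+1},\delta_{k+1}):\mu=\rot(\mu)\}$, numbering $a'(k)$. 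Combining the cases gives $N=a(k)^{k/2}$ for even $k$ and $N=a(k)^{(k-1)/2}a'(k)$ for odd $k$, and since $|\SAv_{k^3}(\ide_{k+1})|\ge N$, the lemma follows. The step requiring the most care is the simultaneous bookkeeping of how the symmetries $\comp$, $\rev$, ${}^{-1}$, and $\rot$ act on the equation $\mu_{k-i+1}\circ\mu_i=\delta_{k^2}$ and on the two patterns $\ide_{k+1}$ and $\delta_{k+1}$ — in particular, seeing that $\delta_{k+1}$-avoidance of $\mu_i$ is genuinely forced in the off-diagonal case but comes for free in the diagonal case.
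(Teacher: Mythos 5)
Your proposal is correct and follows essentially the same route as the paper: both reduce to counting tuples $(\mu_1,\dots,\mu_k)$, pair the indices $i$ and $k+1-i$, and show that each off-diagonal pair contributes a factor $a(k)$ (via the forced condition $\mu_i=\comp(\rev(\mu_i))$ plus $\delta_{k+1}$-avoidance) and the middle index, when $k$ is odd, contributes $a'(k)$. The only differences are cosmetic — you make the injectivity of $(\mu_1,\dots,\mu_k)\mapsto\mu_1\ominus\cdots\ominus\mu_k$ explicit and derive $\mu_i=\comp(\rev(\mu_i))$ from the commutation $\mu_i\circ\delta_{k^2}=\delta_{k^2}\circ\mu_i$ rather than from $\rot\circ\rot=\comp\circ\rev$.
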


\begin{proof}
We have seen that $|\SAv_{k^3}(\ide_{k+1})|$ is at least the number of ways to choose $\mu_1,\ldots,\mu_k\in\Av_{k^2}(\ide_{k+1})$ with $\mu_{k-i+1}\circ\mu_i=\delta_{k^2}$ for all $i\in[k]$. The equation $\mu_{k-i+1}\circ\mu_i=\delta_{k^2}$ shows that $\mu_{k-i+1}$ determines $\mu_i$. Indeed, it is equivalent to the equation $\mu_i=\mu_{k-i+1}^{-1}\circ\delta_{k^2}$. We have $\mu_{k-i+1}^{-1}\circ\delta_{k^2}=\rev(\mu_{k-i+1}^{-1})=\rot(\mu_{k-i+1})$, so we need $\mu_i=\rot(\mu_{k-i+1})=\rot(\rot(\mu_i))=\comp(\rev(\mu_i))$ for all $i\in[k]$.
Because the plot of each $\mu_i$ is obtained by rotating that of $\mu_{k-i+1}$ by $90^\circ$, the permutations $\mu_1,\ldots,\mu_k$ all avoid $\ide_{k+1}$ if and only if they all avoid $\ide_{k+1}$ \emph{and} $\delta_{k+1}$. If $k$ is even, then it follows that the number of ways to choose $\mu_1,\ldots,\mu_k\in\Av_{k^2}(\ide_{k+1})$ with $\mu_{k-i+1}\circ\mu_i=\delta_{k^2}$ for all $i\in[k]$ is the same as the number of ways to choose $\mu_1,\ldots,\mu_{k/2}\in\Av_{k^2}(\ide_{k+1},\delta_{k+1})$ with $\mu_i=\comp(\rev(\mu_i))$ for all $i\in[k/2]$. If $k$ is odd, then we also need $\mu_{(k+1)/2}=\rot(\mu_{(k+1)/2})$. In this case, the number of ways to choose $\mu_1,\ldots,\mu_k\in\Av_{k^2}(\ide_{k+1})$ with $\mu_{k-i+1}\circ\mu_i=\delta_{k^2}$ for all $i\in[k]$ is the same as the number of ways to choose $\mu_1,\ldots,\mu_{(k+1)/2}\in\Av_{k^2}(\ide_{k+1},\delta_{k+1})$ with $\mu_i=\comp(\rev(\mu_i))$ for all $i\in[(k-1)/2]$ and $\mu_{(k+1)/2}=\rot(\mu_{(k+1)/2})$. 
\end{proof}

At this point, we make use of the Robinson--Schensted--Knuth (RSK) correspondence. This famous bijection sends each permutation $\mu\in S_n$ to a pair $(P(\mu),Q(\mu))$ of standard Young tableaux on $n$ boxes that have the same shape. We refer the reader to \cite{Bona,Sagan} for information about the RSK correspondence. Unless otherwise stated, we assume all standard Young tableaux on $n$ boxes are filled with the elements of $[n]$. 

It is well known that the length of the first row in $P(\mu)$ (which is also the length of the first row in $Q(\mu)$ since these two tableaux have the same shape) is the length of the longest increasing subsequence of $\mu$. Similarly, the length of the first column of $P(\mu)$ is the length of the longest decreasing subsequence of $\mu$. It follows that $\mu\in\Av_{k^2}(\ide_{k+1},\delta_{k+1})$ if and only if $P(\mu),Q(\mu)\in\SYT(\lambda_{k\times k})$, where $\SYT(\lambda)$ denotes the set of standard Young tableaux of shape $\lambda$ and $\lambda_{p\times q}$ denotes the partition $(q,q,\ldots,q)$ of length $p$ (i.e., the partition whose Young diagram is a $p\times q$ rectangle). Let $f^{\lambda}=|\SYT(\lambda)|$. By the hook-length formula, we have \begin{equation}\label{Eq17}
f^{\lambda_{p\times q}}=\frac{(pq)!}{\prod_{i=1}^p\prod_{j=1}^q(i+j-1)}.
\end{equation}

In \cite{Schutzenberger}, Sch\"utzenberger defined a map called ``evacuation," which sends each standard Young tableau to a standard Young tableau of the same shape; we denote this map by $\ev$. A standard Young tableau is called \emph{self-evacuating} if it is fixed by the evacuation map. Let $\SYT^{\ev}(\lambda)$ be the set of self-evacuating standard Young tableaux of shape $\lambda$. Some of the many useful properties of the RSK correspondence (see Theorems 3.2.3, 3.6.6, and 3.9.4 in \cite{Sagan}) are the identities \[P(\rev(\mu))=P(\mu)^T,\quad Q(\rev(\mu))=\ev\left(Q(\mu)^T\right),\quad P(\mu^{-1})=Q(\mu),\quad Q(\mu^{-1})=P(\mu),\] where $Y^T$ denotes the transpose of the standard Young tableau $Y$. It follows that \begin{equation}\label{Eq8}
P(\rot(\mu))=P(\rev(\mu^{-1}))=P(\mu^{-1})^T=Q(\mu)^T  
\end{equation}
and 
\begin{equation}\label{Eq9}
Q(\rot(\mu))=Q(\rev(\mu^{-1}))=\ev\left(Q(\mu^{-1})^T\right)=\ev\left(P(\mu)^T\right).
\end{equation} 
Using the fact that the RSK map is bijective, we deduce from \eqref{Eq8} and \eqref{Eq9} that $\mu=\rot(\mu)$ if and only if $P(\mu)=Q(\mu)^T$ and $Q(\mu)=\ev(Q(\mu))$. Consequently, 
\begin{equation}\label{Eq7}
a'(k)=|\SYT^{\ev}(\lambda_{k\times k})|.
\end{equation}

The evacuation map satisfies $\ev\left(Y^T\right)=\ev(Y)^T$ for every standard Young tableau $Y$. Referring to \eqref{Eq8} and \eqref{Eq9} once again, we find that \[P(\comp(\rev(\mu)))=P(\rot(\rot(\mu)))=Q(\rot(\mu))^T=\ev\left(P(\mu)^T\right)^T=\ev(P(\mu)).\] Similarly, \[Q(\comp(\rev(\mu)))=Q(\rot(\rot(\mu)))=\ev\left(P(\rot(\mu))^T\right)=\ev(Q(\mu)).\] This shows that $\mu=\comp(\rev(\mu))$ if and only if $P(\mu)$ and $Q(\mu)$ are both self-evacuating. Therefore, 
\begin{equation}\label{Eq10}
a(k)=|\SYT^{\ev}(\lambda_{k\times k})|^2.
\end{equation} According to \eqref{Eq7} and \eqref{Eq10}, Lemma \ref{Lem1} tells us that 
\begin{equation}\label{Eq11}
|\SAv_{k^3}(\ide_{k+1})|\geq|\SYT^{\ev}(\lambda_{k\times k})|^k
\end{equation}
for every positive integer $k$. In order to complete the proof of the lower bound in Theorem \ref{Thm1}, we are left to prove the following lemma. 

\begin{lemma}\label{Lem2}
For every $k\geq 1$, the number of self-evacuating $k\times k$ standard Young tableaux is given by \[|\SYT^{\ev}(\lambda_{k\times k})|=\binom{\left\lfloor k^2/2\right\rfloor}{\left\lfloor k^2/4\right\rfloor}\left(f^{\lambda_{\lfloor k/2\rfloor\times\lceil k/2\rceil}}\right)^2.\]
\end{lemma}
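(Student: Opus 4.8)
The plan is to set up a bijection between self-evacuating $k\times k$ standard Young tableaux and a combinatorial object that factors naturally into the product $\binom{\lfloor k^2/2\rfloor}{\lfloor k^2/4\rfloor}\left(f^{\lambda_{\lfloor k/2\rfloor\times\lceil k/2\rceil}}\right)^2$. The right framework here is the classical result, due to Sch\"utzenberger and developed further by Stembridge, that self-evacuating tableaux are counted by the fake-degree / cyclic-sieving evaluation at $q=-1$: for a rectangular shape $\lambda_{k\times k}$, the number of self-evacuating SYT equals $f^{\lambda_{k\times k}}(-1)$, where $f^\lambda(q)$ is the $q$-analogue of $f^\lambda$ obtained by replacing each factor in the hook-length formula by its $q$-integer. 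Equivalently, and more usefully for extracting the stated product form, self-evacuating $k\times k$ tableaux are in bijection with \emph{domino tableaux} of the $k\times k$ shape, i.e. tilings of the $k\times k$ Young diagram by dominoes together with a standard filling; this is the ``domino insertion'' picture of Barbasch--Vogan, Garfinkle, and van Leeuwen.

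Concretely, I would argue as follows. First, invoke the bijection between $\SYT^{\ev}(\lambda_{k\times k})$ and standard domino tableaux of shape $\lambda_{k\times k}$: a self-evacuating tableau $T$ corresponds to the sequence of shapes obtained by reading off where $2i-1$ and $2i$ sit, which always form a domino because of the self-evacuation symmetry (when $k^2$ is odd there is one fixed cell, occupied by the median entry $(k^2+1)/2$, which sits on the main antidiagonal). Second, recall that a standard domino tableau of a given shape is determined by (a) a domino \emph{tiling} of the shape and (b) a linear extension of the poset on the dominoes; and that the $k\times k$ square has exactly $\binom{\lfloor k^2/2\rfloor}{\lfloor k^2/4\rfloor}$ domino tilings — this is the classical count (for the $k \times k$ board the number of domino tilings when $k$ is even is a perfect square of a central binomial coefficient after the standard reduction, and the $q=-1$ hook-length evaluation makes the general formula $\binom{\lfloor k^2/2\rfloor}{\lfloor k^2/4\rfloor}$ transparent). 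Third — and this is the step carrying the real content — identify the number of standard fillings of domino tableaux on $\lambda_{k\times k}$ with $\left(f^{\lambda_{\lfloor k/2\rfloor\times\lceil k/2\rceil}}\right)^2$ via the $2$-quotient: the set of all standard domino tableaux of shape $\lambda_{k\times k}$ is in bijection (Stanton--White, Littlewood) with pairs of SYT whose shapes $(\lambda^{(0)},\lambda^{(1)})$ form the $2$-quotient of $\lambda_{k\times k}$, and for the $k\times k$ rectangle the $2$-quotient is the pair of rectangles $\lfloor k/2\rfloor\times\lceil k/2\rceil$ and $\lceil k/2\rceil\times\lfloor k/2\rfloor$ (which have equal numbers of SYT). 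Multiplying the tiling count by the filling count — once one checks that these two choices are genuinely independent, i.e. that every tiling of the $k\times k$ square supports the ``full'' set of standard fillings realizing all pairs from the $2$-quotient — yields exactly the claimed formula.

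Alternatively, if one prefers to avoid the domino-tableau machinery, there is a direct route via the $q=-1$ phenomenon: Stembridge's $q=-1$ theorem gives $|\SYT^{\ev}(\lambda_{k\times k})| = \left.\dfrac{[k^2]_q!}{\prod_{(i,j)}[h(i,j)]_q}\right|_{q=-1}$, and one evaluates this limit directly. Writing $n=k^2$, the numerator $[n]_q!$ at $q=-1$ contributes $\lfloor n/2\rfloor! \cdot 2^{\lfloor n/2\rfloor}$ (each even $q$-integer $[2m]_{-1}=0$ must be cancelled, each $[2m+1]_{-1}=1$, each $[2m]_q/[2]_q \to m$...), and the hook product's behavior at $q=-1$ is governed by the number of even hook lengths in each row/column; a careful bookkeeping of which factors vanish and which survive, organized by splitting the $k\times k$ square along its center, reassembles into $\binom{\lfloor k^2/2\rfloor}{\lfloor k^2/4\rfloor}$ times the hook-length formula \eqref{Eq17} for two copies of the $\lfloor k/2\rfloor\times\lceil k/2\rceil$ rectangle. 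This computation, while elementary, is the part I expect to be most delicate, since it requires tracking the orders of vanishing of $\prod [h(i,j)]_q$ at $q=-1$ uniformly in the parity of $k$; the cleanest exposition probably handles $k$ even and $k$ odd separately, the odd case being where the extra central cell and the asymmetry between $\lfloor k/2\rfloor$ and $\lceil k/2\rceil$ both enter.

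\textbf{Main obstacle.} The crux is establishing the product decomposition $|\SYT^{\ev}(\lambda_{k\times k})| = (\text{number of domino tilings of the }k\times k\text{ square})\times(\text{number of SYT of the }2\text{-quotient shape})$ with the two factors independent. Whichever route one takes, proving that the ``tiling'' degrees of freedom and the ``filling'' degrees of freedom multiply cleanly — rather than interacting — is where the argument has to be airtight; the rectangular shape is exactly what makes this work, since its $2$-quotient is itself a pair of rectangles and its domino tilings are correspondingly rigid.
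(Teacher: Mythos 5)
Your overall strategy --- self-evacuating tableaux $\leftrightarrow$ standard domino tableaux $\leftrightarrow$ pairs of tableaux on the $2$-quotient --- is sound and, once corrected, is essentially the paper's proof, which invokes Egge's bijection $\SYT^{\ev}(\lambda_{k\times k})\to\SYT(\lambda_{k\times k}^o,\lambda_{k\times k}^e)$; the shapes $\lambda_{k\times k}^o,\lambda_{k\times k}^e$ defined there via the boundary word are exactly the $2$-quotient rectangles $\lambda_{\lfloor k/2\rfloor\times\lceil k/2\rceil}$ and $\lambda_{\lceil k/2\rceil\times\lfloor k/2\rfloor}$. However, the middle of your first route contains a genuine error. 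The $k\times k$ square does not have $\binom{\lfloor k^2/2\rfloor}{\lfloor k^2/4\rfloor}$ domino tilings: for $k=4$ the square has $36$ tilings, while $\binom{8}{4}=70$. Relatedly, the factorization you propose --- (number of tilings) times (number of fillings), with the two choices independent --- is false: the number of linear extensions of the domino poset depends on the tiling, so the set of standard domino tableaux does not split as a product over tilings. The independence you flag as the ``main obstacle'' is not merely delicate; it fails, and no correct proof passes through it.

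The fix is that the tilings should not be counted separately at all. The Stanton--White/Littlewood $2$-quotient bijection applies to the \emph{entire} set of standard domino tableaux of shape $\lambda_{k\times k}$ at once, sending it to the set of pairs $(X,Y)$ of standard tableaux of the two quotient shapes whose entry sets partition $[\lfloor k^2/2\rfloor]$. The binomial coefficient $\binom{\lfloor k^2/2\rfloor}{\lfloor k^2/4\rfloor}$ in the final formula counts the choice of which entries land in $X$ (each quotient rectangle has $\lfloor k/2\rfloor\cdot\lceil k/2\rceil=\lfloor k^2/4\rfloor$ cells), not the domino tilings, and the factor $\bigl(f^{\lambda_{\lfloor k/2\rfloor\times\lceil k/2\rceil}}\bigr)^2$ counts the two standardized fillings (the two quotient rectangles are transposes of one another, hence have equal numbers of standard Young tableaux). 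Routed this way, your argument coincides with the paper's. Your alternative $q=-1$ route via Stembridge's theorem is viable in principle, but you have not carried out the evaluation of the $q$-hook-length formula at $q=-1$, which is the entire content of that approach.
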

\begin{proof}
Given a partition $\lambda$, let us start at the bottom left corner of the Young diagram of $\lambda$ and traverse the southeast perimeter. We write down the letter $v$ every time we traverse a vertical line segment, and we write an $h$ every time we traverse a horizontal line segment. This produces a word $w(\lambda)$ over the alphabet $\{v,h\}$. Let $w^o(\lambda)$ (respectively, $w^e(\lambda)$) be the word obtained by deleting the letters in $w(\lambda)$ in even (respectively, odd) positions and then removing any copies of the letter $v$ that come before the first $h$ and any copies of $h$ that come after the last $v$ in this new word. There are unique partitions $\lambda^o$ and $\lambda^e$ such that $w(\lambda^o)=w^o(\lambda)$ and $w(\lambda^e)=w^e(\lambda)$. For example, if we consider the partition $\lambda_{4\times 4}=(4,4,4,4)$, then we have \[w(\lambda_{4\times 4})=hhhhvvvv,\quad\text{so}\quad w^o(\lambda_{4\times 4})=w^e(\lambda_{4\times 4})=hhvv.\] This means that $\lambda_{4\times 4}^o=\lambda_{4\times 4}^e=\lambda_{2\times 2}$. More generally, we have $\lambda_{k\times k}^o=\lambda_{\lfloor k/2\rfloor\times\lceil k/2\rceil}$ and $\lambda_{k\times k}^e=\lambda_{\lceil k/2\rceil\times \lfloor k/2\rfloor}$.  

Given two partitions $\lambda_1\vdash n_1$ and $\lambda_2\vdash n_2$, let $\SYT(\lambda_1,\lambda_2)$ be the set of pairs $(X,Y)$ such that
\begin{itemize}
    \item $X$ is a standard Young tableaux of shape $\lambda_1$ whose entries form some $n_1$-element subset $\Psi$ of $[n_1+n_2]$ (here, we do not require that $\Psi=[n_1]$);
    \item $Y$ is a standard Young tableaux of shape $\lambda_2$ whose entries form the set $[n_1+n_2]\setminus\Psi$.
\end{itemize}

In Theorems 4.13 and 5.5 of \cite{Egge}, Egge showed that there is a bijection between self-evacuating tableaux of shape $\lambda$ and elements of certain sets of the form $\SYT(\lambda_1,\lambda_2)$, where $\lambda_1,\lambda_2$ depend on $\lambda$. We only need this result when $\lambda=\lambda_{k\times k}$, which is a particularly easy case. Specializing Egge's results, we find that there is a bijection \[\SYT^{\ev}(\lambda_{k\times k})\to\SYT(\lambda_{k\times k}^o,\lambda_{k\times k}^e)=\SYT\left(\lambda_{\lfloor k/2\rfloor\times\lceil k/2\rceil},\lambda_{\lceil k/2\rceil\times \lfloor k/2\rfloor}\right).\] 
The desired result follows since \[\big|\SYT\left(\lambda_{\lfloor k/2\rfloor\times\lceil k/2\rceil},\lambda_{\lceil k/2\rceil\times \lfloor k/2\rfloor}\right)\big|=\binom{2\left\lfloor k/2\right\rfloor\cdot\left\lceil k/2\right\rceil}{\left\lfloor k/2\right\rfloor\cdot\left\lceil k/2\right\rceil}f^{\lambda_{\lfloor k/2\rfloor\times\lceil k/2\rceil}}f^{\lambda_{\lceil k/2\rceil\times\lfloor k/2\rfloor}}\] \[=\binom{\left\lfloor k^2/2\right\rfloor}{\left\lfloor k^2/4\right\rfloor}\left(f^{\lambda_{\lfloor k/2\rfloor\times\lceil k/2\rceil}}\right)^2. \qedhere\]
\end{proof}

Lemma~\ref{Lem2} and the inequality in \eqref{Eq11} prove the lower bound in Theorem~\ref{Thm1}; we now prove the upper bound. The proof follows easily from the argument used to prove Theorem~2.1 in  \cite{BonaSquares}, which we reproduce here for easy reference. 

Suppose $\pi\in\SAv_{k^3}(\ide_{k+1})$.
This means that the standard Young tableau $P(\pi)$ has at most $k$ columns. Therefore, it follows from Greene's theorem \cite[Theorem 7.23.17]{Stanley} that we can color the entries of $\pi$ with at most $k$ colors so that each color class forms a decreasing subsequence of $\pi$. We may assume that the color red is used to color a maximum-length decreasing subsequence. Let the red entries be $\pi(i_1)>\cdots>\pi(i_m)$, where $i_1<\cdots<i_m$. Suppose by way of contradiction that $m\geq k^2+1$. By the pigeonhole principle, there exist $j_1<\cdots<j_{k+1}$ such that $\pi^2(i_{j_1}),\ldots,\pi^2(i_{j_{k+1}})$ are all the same color. Since $\pi(i_{j_{k+1}})<\cdots<\pi(i_{j_1})$, this means that $\pi^2(i_{j_{k+1}})>\cdots>\pi^2(i_{j_1})$. It follows that $\pi^2(i_{j_1}),\ldots,\pi^2(i_{j_{k+1}})$ form an occurrence of the pattern $\ide_{k+1}$ in $\pi^2$, which is a contradiction. We deduce that $m\leq k^2$, which means that $\pi$ does not contain a decreasing subsequence of length $k^2+1$. 

As mentioned above, the hypothesis that $\pi$ avoids $\ide_{k+1}$ guarantees that the first row in the standard Young tableau $P(\pi)$ has length at most $k$. We just saw that $\pi$ avoids $\delta_{k^2+1}$, so the first column in $P(\pi)$ has length at most $k^2$. This tableau has $k^3$ boxes, so it must be of shape $\lambda_{k^2\times k}$. Note that $Q(\pi)$ must also be of shape $\lambda_{k^2\times k}$. Because the RSK correspondence is a bijection, we find that $|\SAv_{k^3}(\ide_{k+1})|\leq\left(f^{\lambda_{k^2\times k}}\right)^2$, as desired. 

To finish this section, we derive the asymptotic estimates in \eqref{Eq14}. The Barnes $G$-function is defined on integers $n\geq 2$ by $G(n)=\prod_{j=1}^{n-2}j!$. It is known \cite[Equation A.6]{Voros} that
\begin{equation}\label{Eq15}
\log(G(n+1))=\frac{1}{2}n^2\log n-\frac{3}{4}n^2+O(n).
\end{equation} 
Furthermore, for any positive integers $p$ and $q$, we can rewrite \eqref{Eq17} as 
\begin{equation}\label{Eq16}
f^{\lambda_{p\times q}}=\frac{(pq)!G(p+1)G(q+1)}{G(p+q+1)}.    
\end{equation}
Combining Stirling's formula with \eqref{Eq15} and \eqref{Eq16} when $p=\left\lfloor k/2\right\rfloor$ and $q=\left\lceil k/2\right\rceil$ yields the estimate \[\log(f^{\lambda_{\lfloor k/2\rfloor\times\lceil k/2\rceil}})=\log(\left\lfloor k/2\right\rfloor\cdot\left\lceil k/2\right\rceil)!)+\log(G(\lfloor k/2\rfloor+1))+\log(G(\lceil k/2\rceil+1))-\log(G(k+1))\] \[=(k^2/4)\log(k^2/4)+\frac{1}{2}(k/2)^2\log(k/2)+\frac{1}{2}(k/2)^2\log(k/2)-\frac{1}{2}k^2\log k+O(k^2)\] 
\[
=(k^2/4)\log k+O(k^2),\] so \[\left(f^{\lambda_{\lfloor k/2\rfloor\times\lceil k/2\rceil}}\right)^{2k}=k^{k^3/2+O(k^3/\log k)}.\]
Another application of Stirling's formula yields \[\binom{\left\lfloor k^2/2\right\rfloor}{\left\lfloor k^2/4\right\rfloor}^k=2^{k^3/2+O(k\log k)}=k^{O(k^3/\log k)},\] so \[\binom{\left\lfloor k^2/2\right\rfloor}{\left\lfloor k^2/4\right\rfloor}^k\left(f^{\lambda_{\lfloor k/2\rfloor\times\lceil k/2\rceil}}\right)^{2k}=k^{k^3/2+O(k^3/\log k)}.\] This proves the first equality in \eqref{Eq14}. For the second equality, we again combine Stirling's formula with \eqref{Eq15} and \eqref{Eq16}, this time with $p=k^2$ and $q=k$, to find that \[\log\left(f^{\lambda_{k^2\times k}}\right)=\log((k^3)!)+\log(G(k^2+1))+\log(G(k+1))-\log(G(k^2+k+1))\] \[=k^3\log(k^3)+\left(\frac{1}{2}(k^2)^2\log(k^2)-\frac{3}{4}(k^2)^2\right)-\left(\frac{1}{2}(k^2+k)^2\log(k^2+k)-\frac{3}{4}(k^2+k)^2\right)+O(k^3)\] \[=3k^3\log k+k^4\log k-\frac{3}{4}k^4-(k^2+k)^2\log k+\frac{3}{4}(k^2+k)^2+O(k^3)=k^3\log k+O(k^3).\] The second equality in \eqref{Eq14} is now immediate.

\section{\texorpdfstring{$312$}{312}-Avoiding Permutations of Order \texorpdfstring{$1$}{1} or \texorpdfstring{$3$}{3}}\label{Sec:Order3}

Recall that $\Omega_n^{\{1,3\}}(\tau)$ denotes the set of permutations of order $1$ or $3$ that avoid the pattern $\tau$. It follows from the observations we made in Section~\ref{Sec:Preliminaries} that the elements of $\Omega_n^{\{1,3\}}(231)$ are precisely the reverse complements of the elements of $\Omega_n^{\{1,3\}}(312)$. This proves the first equality in Theorem~\ref{Thm2}. The remainder of this section is devoted to the second equality in that theorem. 

\begin{proof}[Proof of Theorem \ref{Thm2}]

Fix $n\geq 1$, and suppose $\pi\in \Omega_n^{\{1,3\}}(312)$. Let $j$ be such that $\pi(j)=1$. Because $\pi$ avoids $312$, we can write $\pi=\sigma\oplus\mu$ for some $\sigma\in \Av_j(312)$ and $\mu\in\Av_{n-j}(312)$. Since $\pi^3=\sigma^3\oplus\mu^3$, the permutations $\sigma$ and $\mu$ must have orders dividing $3$. This shows that every permutation in $\Omega_n^{\{1,3\}}(312)$ can be written uniquely as the sum of a permutation in $\bigcup_{m\geq 1}\Omega_m^{\{1,3\}}(312)$ that ends in the entry $1$ and a (possibly empty) permutation in $\bigcup_{m\geq 0}\Omega_m^{\{1,3\}}(312)$. Let $B(x)=\sum_{m\geq 1}b(m)x^m$, where $b(m)$ is the number of permutations in $\Omega_m^{\{1,3\}}(312)$ that end in the entry $1$. We have $\sum_{m\geq 1}\big|\Omega_m^{\{1,3\}}(312)\big|x^m=\left(1+\sum_{m\geq 1}\big|\Omega_m^{\{1,3\}}(312)\big|x^m\right)B(x)$, which we can rewrite as \[\sum_{m\geq 1}\big|\Omega_n^{\{1,3\}}(312)\big|x^m=\frac{B(x)}{1-B(x)}.\] From this, it is straightforward to check that the second equality in Theorem \ref{Thm2} is equivalent to the identity \[B(x)=x+\frac{x^3(1+x)^2}{1-2x^3}.\] 

We clearly have $b(1)=1$, so assume $n\geq 2$. Let $\pi\in \Omega_n^{\{1,3\}}(312)$ be a permutation with $\pi(n)=1$. Let $M$ be the smallest positive integer such that $\pi(n-M)\neq M+1$. Note that \[\pi(n-M)\not\in\{\pi(n-j):0\leq j<M\}=\{j+1:0\leq j<M\}=\{1,\ldots,M\},\] so $\pi(n-M)>M+1$. Let $k=\pi(1)$, and observe that $\pi(k)=n$ because $\pi^3(n)=n$. The points in the plot of $\pi$ that are not on the line $y=x$ can be partitioned into $3$-cycles. Every $3$-cycle in a permutation either forms an occurrence of the pattern $231$ or forms an occurrence of the pattern $312$. Since $\pi$ avoids $312$, each $3$-cycle in $\pi$ forms a $231$ pattern. In such a $231$ pattern, the two higher points are above the line $y=x$, and the lowest point is below this line (see Figure~\ref{Fig1}).

\begin{figure}[h]
\begin{center}
\includegraphics[width=.3\linewidth]{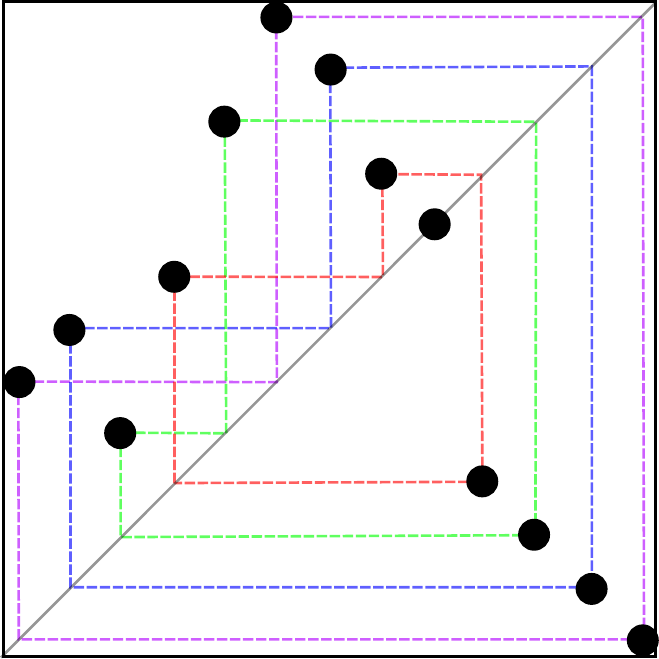}
\caption{Illustrating the proof of Theorem \ref{Thm2}, this figure shows a permutation in $\Omega_{13}^{\{1,3\}}(312)$. In this example, we have $M=4$ and $k=6$. There are four $3$-cycles, each of which is connected by a colored dotted hexagon. For example, the purple dotted hexagon indicates that $\pi(1)=6$, $\pi(6)=13$, and $\pi(13)=1$. The permutation also has $9$ as its only fixed point.}
\label{Fig1}
\end{center}  
\end{figure}

We claim that the only points below $y=x$ are the points $(i,\pi(i))$ with $n-M+1\leq i\leq n$. To see this, suppose instead that there is some $r\leq n-M$ with $\pi(r)<r$. Assume that we have chosen $r$ maximally subject to these constraints. The entries in the same $3$-cycle as $\pi(r)$ are $\pi^2(r)$ and $r$. These entries form a $231$ pattern, so $\pi(r)<\pi^2(r)<r$. If $r<k$, then the entries $k,\pi^2(r),r$ form a $312$ pattern in $\pi$. If $\pi(r)>k$, then $n,\pi^2(r),r$ form a $312$ pattern in $\pi$. Both of these situations are forbidden, so $\pi(r)<k<r$. Since $r\leq n-M$, it follows from the definition of $M$ that \[\pi(r)\not\in\{\pi(n-j):0\leq j<M\}=\{j+1:0\leq j<M\}=\{1,\ldots,M\},\] so $\pi(r)\geq M+1$. If $\pi(r)>M+1$, then the entry $M+1$ appears to the left of $\pi(r)$ by the maximality of $r$. In this case, the entries $k, M+1, \pi(r)$ form an occurrence of the pattern $312$ in $\pi$, which is impossible. Consequently, $\pi(r)=M+1$. We know that $\pi(n-M)>M+1$, so $n,M+1,\pi(n-M)$ form a $312$ pattern in $\pi$. This is our desired contradiction, so we have proven that there are only $M$ points in the plot of $\pi$ below the line $y=x$. This means that $\pi$ has exactly $M$ $3$-cycles. 

Because $\pi$ avoids $312$, there is at most one point in the plot of $\pi$ of the form $(s,s)$ with $s<k$. We claim that if such a point exists, it must be $(M+1,M+1)$. To see this, suppose instead that $s\neq M+1$. We know by the previous paragraph that $s\not\in\{1,\ldots,M\}$, so $M+1<s<k$. The previous paragraph also tells us that the point $(\pi^{-1}(M+1),M+1)$ lies above the line $y=x$, so $\pi^{-1}(M+1)<M+1<s$. This implies that the entries $k,M+1,s$ form a $312$ pattern in $\pi$, which is impossible. This proves the claim. Similarly, there is at most one point in the plot of $\pi$ of the form $(t,t)$ with $t>k$. If such a point exists, it must be $(n-M,n-M)$ because otherwise, the entries $n,t,\pi(n-M)$ would form a $312$ pattern in $\pi$. Thus, $\pi$ has at most $2$ fixed points. 

We claim that each of the sets $\{\pi(i):1\leq i\leq M\}$ and $\{\pi^{-1}(i):n-M+1\leq i\leq n\}$ is a set of consecutive integers. We will prove this claim for the first set, the proof of the claim for the second set is similar. Our proof is by induction on $n$. If $\pi(M+1)=M+1$, then removing the entry $M+1$ from $\pi$ and ``normalizing'' (that is, decrementing each entry that is greater than $M+1$ by $1$) yields a permutation $\widetilde \pi\in\Omega_{n-1}^{\{1,3\}}(312)$. By induction, the set $\{\widetilde\pi(i):1\leq i\leq M\}=\{\pi(i)-1:1\leq i\leq M\}$ is a set of consecutive integers, and this proves the claim in this case. A similar inductive argument proves the claim in the case in which $\pi(n-M)=n-M$. We saw above that $M+1$ and $n-M$ are the only possible fixed points of $\pi$, so we may now assume $\pi$ has no fixed points. It follows that every entry of $\pi$ is in a $3$-cycle and that $n=3M$.  Furthermore, $\pi(n+1-i)=i$ for all $1\leq i\leq M$, and every $3$-cycle of $\pi$ contains a unique entry in $\{1,\ldots,M\}$. If $1\leq i\leq M$ and $\pi(i)\geq n-M+1$, then $\pi^2(i)\in\{1,\ldots,M\}$, so $i$ and $\pi^2(i)$ are two elements of $\{1,\ldots,M\}$ in the same $3$-cycle. This is a contradiction, so we must have $M+1\leq \pi(i)\leq n-M=2M$ for all $1\leq i\leq M$. Hence, $\{\pi(i):1\leq i\leq M\}=\{M+1,\ldots,2M\}$ is a set of consecutive integers, as desired. 

Using the claim from the preceding paragraph, we see that if we remove the fixed points from $\pi$ and normalize the resulting permutation, then we obtain a permutation of the form $(\rot(\mu)\oplus\mu)\ominus\delta_M$, where $\mu\in\Av_M(312,213)$. Thus, specifying $\pi$ amounts to specifying $\mu$ along with which of the two possible fixed points actually appear in $\pi$. It is well known (see \cite{Linton}) that $|\Av_M(312,213)|=2^{M-1}$, so there are $2^{M-1}$ possible choices for $\mu$. In total, \[B(x)=x+\sum_{M\geq 1}2^{M-1}(x^{3M}+2x^{3M+1}+x^{3M+2})=x+(1+x)^2\sum_{M\geq 1}2^{M-1}x^{3M}=x+\frac{x^3(1+x)^2}{1-2x^3},\] as desired. 
\end{proof}

\section{Strong Pattern Avoidance Enumeration}\label{Sec:Enumeration}

The purpose of this section is to prove the enumerative results stated in Theorems \ref{Thm3} and \ref{Thm4}. 

\begin{proof}[Proof of Theorem \ref{Thm3}]
The first equality in Theorem \ref{Thm3} follows from the discussion in Section \ref{Sec:Preliminaries} because $231$ and $4312$ are the reverse complements of $132$ and $3421$, respectively. Thus, we wish to show that $|\SAv_n(132,3421)|=2n^2-7n+8$ for all $n\geq 2$. One can easily check that this is true when $n=2$ or $n=3$, so we may assume $n\geq 4$. Let us write $\SAv_n(132,3421)$ as the disjoint union $\bigcup_{i=1}^n X^{(i)}$, where $X^{(i)}=\{\pi\in\SAv_n(132,3421):\pi(i)=n\}$. There is a bijection $X^{(n)}\to\SAv_{n-1}(132,3421)$ obtained by simply removing the entry $n$ from each permutation in $X^{(n)}$. Consequently, 
\begin{equation}\label{Eq2}
\big|X^{(n)}\big|=|\SAv_{n-1}(132,3421)|.
\end{equation}

Suppose $\pi\in X^{(i)}$ for some $i\in\{2,\ldots,n-2\}$. Using the fact that $\pi$ avoids $132$ and $3421$, we find that $\pi=\sigma\ominus\ide_{n-i}$ for some $\sigma\in\Av_i(132,231)$ with $\sigma(i)=i$. Note that $\pi^2(i+1)=\pi(1)$ and $\pi^2(i+2)=\pi(2)$. Furthermore, $\pi(1)$ and $\pi(2)$ are both larger than the entry $\pi^2(i)=\pi(n)=n-i$. The entries $\pi^2(i),\pi^2(i+1),\pi^2(i+2)$ cannot form a $132$ pattern in $\pi^2$, so we must have $\pi(1)<\pi(2)$. Because $\sigma$ avoids $132$ and $231$, this forces $\sigma=\ide_i$. Therefore, $\pi=\ide_i\ominus\ide_{n-i}$. This permutation is indeed in $X^{(i)}$ because its square is either $\ide_n$ or is the skew sum of two identity permutations. Therefore, 
\begin{equation}\label{Eq3}
\big|X^{(i)}\big|=1\quad\text{for all }i\in\{2,\ldots,n-2\}.  
\end{equation}

Now assume $\pi\in X^{(n-1)}$ so that $\pi(n-1)=n$. Since $\pi$ avoids $132$, we can write $\pi=\sigma\ominus 1$, where $\sigma\in\Av_{n-1}(132,231)$ is such that $\sigma(n-1)=n-1$. Because $\sigma$ avoids $132$ and $231$, we must have either $\pi(n-2)=n-1$ or $\pi(1)=n-1$. Suppose first that $\pi(n-2)=n-1$. We have $\pi^2(n-2)=\pi(n-1)=n$ and $\pi^2(n)=\pi(1)$. Note that $\pi^2(n-1)=\pi(n)\neq\pi(2)$, so $\pi(2)$ appears to the left of $n$ in $\pi^2$. The entries $\pi(2),n,\pi(1)$ cannot form an occurrence of the pattern $132$ in $\pi^2$, so $\pi(1)<\pi(2)$. Because $\sigma$ avoids $132$ and $231$, this forces $\pi$ to be the permutation $\ide_{n-1}\ominus 1$, which is indeed an element of $X^{(n-1)}$ because its square is $\ide_{n-2}\ominus 12$. Next, assume $\pi(1)=n-1$. In this case, $\pi=((1\ominus\mu)\oplus 1)\ominus 1$ and $\pi^2=1\ominus((\mu^2\ominus 1)\oplus 1)$ for some $\mu\in S_{n-3}$. The permutation $\mu$ must be in $\SAv_{n-3}(132,231)$, and any element of this set could be $\mu$. Hence, \begin{equation}\label{Eq4}
\big|X^{(n-1)}\big|=1+|\SAv_{n-3}(132,231)|.
\end{equation}

We now want to determine $\big|X^{(1)}\big|$. For $2\leq r\leq n$, let $X^{(1)}(r)=\{\pi\in X^{(1)}:\pi(r)=1\}$. If $\pi\in X^{(1)}(n)$, then 
$\pi=1\ominus\sigma\ominus 1$ for some $\sigma\in\SAv_{n-2}(132,231)$. The permutation $\sigma$ could be any element of $\SAv_{n-2}(132,231)$, so \[\big|X^{(1)}(n)\big|=|\SAv_{n-2}(132,231)|.\] Now suppose 
$\pi\in X^{(1)}(n-1)$. Since $\pi^2(n-1)=n$ and $\pi^2$ avoids $132$, we must have $\pi^2(n)=1$. This means that $\pi(n)=n-1$. Consequently, $\pi=1\ominus((\mu\ominus 1)\oplus 1)$ for some $\mu\in S_{n-3}$. Furthermore, $\pi^2=((1\ominus\mu^2)\oplus 1))\ominus 1$, so $\mu\in\SAv_{n-3}(132,231)$. The permutation $\mu$ could be any element of $\SAv_{n-3}(132,231)$, so \[\big|X^{(1)}(n-1)\big|=|\SAv_{n-3}(132,231)|.\]
We easily check that $X^{(1)}(2)=\{n123\cdots (n-1)\}$, so $\big|X^{(1)}(2)\big|=1$. We will show that $X^{(1)}(3),\ldots,$ $X^{(1)}(n-2)$ are empty, which will imply that 
\begin{equation}\label{Eq5}
|X^{(1)}|=1+|\SAv_{n-3}(132,231)|+|\SAv_{n-2}(132,231)|.
\end{equation}

Suppose by way of contradiction that $\pi\in X^{(1)}(r)$ for some $r\in\{3,\ldots,n-2\}$. Note that $\pi^2(r)=n$. Using the fact that $\pi^2$ avoids $132$ and $3421$, one can show that $\pi^2(r+j)=j$ for all $j\in\{1,\ldots,n-r\}$. In particular, $\pi^2(r+1)=1=\pi(r)$, so $\pi(r+1)=r$. We also have $\pi(n)\leq n-1$ because $\pi(1)=n$. Observe that $r=\pi(r+1)<\pi(r+2)<\cdots<\pi(n)\leq n-1$ because $\pi$ avoids $132$ and $\pi(r)=1$. This means that $\pi(r+j)=r+j-1$ for each $j\in\{1,\ldots,n-r\}$. We deduce that $2=\pi^2(r+2)=\pi(r+1)=r$, which is our desired contradiction. 

Combining \eqref{Eq2}, \eqref{Eq3}, \eqref{Eq4}, and \eqref{Eq5}, we find that 
\begin{equation}\label{Eq1}
|\SAv_n(132,3421)|=|\SAv_{n-1}(132,3421)|+|\SAv_{n-2}(132,231)|+2|\SAv_{n-3}(132,231)|+n-1
\end{equation} when $n\geq 4$. 

We still need to determine $|\SAv_n(132,231)|$. Suppose $n\geq 4$, and let $Z^{(i)}=\{\pi\in\SAv_n(132,231):\pi(i)=n\}$. Every permutation that avoids $132$ and $231$ must either start or end in its largest entry, so $Z^{(i)}$ is empty whenever $2\leq i\leq n-1$. Removing the largest entries from the elements of $Z^{(n)}$ yields a bijection between $Z^{(n)}$ and $\SAv_{n-1}(132,231)$, so 
\begin{equation}\label{Eq6}
|Z^{(n)}|=|\SAv_{n-1}(132,231)|.
\end{equation} 
For $2\leq r\leq n$, let $Z^{(1)}(r)=\{\pi\in Z^{(1)}:\pi(r)=1\}$. Observe that $Z^{(1)}(r)\subseteq X^{(1)}(r)$, where $X^{(1)}(r)$ is as above. In particular, $Z^{(1)}(3),\ldots,Z^{(1)}(n-2)$ are empty. We also have $Z^{(1)}(2)\subseteq X^{(1)}(2)=\{n123\cdots(n-1)\}$. The square of $n123\cdots(n-1)$ contains the pattern $231$, so $Z^{(1)}(2)$ is empty. It is straightforward to check that $Z^{(1)}(n)=\{\delta_n\}$, so $|Z^{(1)}(n)|=1$. Finally, suppose $\pi\in Z^{(1)}(n-1)$. We know that $\pi\in X^{(1)}(n-1)$, so it follows from our discussion above that $\pi(n)=n-1$. Since $\pi$ avoids $132$ and $231$, we must have $\pi=n(n-2)(n-3)\cdots 321(n-1)$. However, this forces $\pi^2$ to contain $231$. This is a contradiction, so $Z^{(1)}(n-1)$ is empty. 

Putting this all together, we find that \[|\SAv_n(132,231)|=|\SAv_{n-1}(132,231)|+1\] when $n\geq 4$. It is easy to check that $|\SAv_j(132,231)|=j$ when $j\in\{1,2,3\}$. Thus, $|\SAv_n(132,231)|$ $=n$ for all $n\geq 1$. Invoking \eqref{Eq1}, we see that \[|\SAv_n(132,3421)|=|\SAv_{n-1}(132,3421)|+4n-9\] for $n\geq 4$. An easy induction now proves that \[|\SAv_n(132,3421)|=2n^2-7n+8\quad\text{for all }n\geq 2. \qedhere\] 
\end{proof}

We record the following corollary, which we demonstrated during the preceding proof. 

\begin{corollary}\label{Cor1}
For every positive integer $n$, we have \[|\SAv_n(132,231)|=n.\]
\end{corollary}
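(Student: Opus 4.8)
The statement was in fact already extracted during the proof of Theorem~\ref{Thm3}; my plan is to present that extraction as a self-contained argument. The core of it is the recursion $|\SAv_n(132,231)| = |\SAv_{n-1}(132,231)| + 1$ for all $n \ge 4$, together with the base values $|\SAv_1(132,231)| = 1$, $|\SAv_2(132,231)| = 2$, and $|\SAv_3(132,231)| = 3$, which I would verify directly (for $n=3$ the only $132,231$-avoider whose square reintroduces a pattern is $312$, whose square is $231$). An immediate induction then yields $|\SAv_n(132,231)| = n$ for every $n \ge 1$.

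To establish the recursion, fix $n \ge 4$ and partition $\SAv_n(132,231) = \bigcup_{i=1}^n Z^{(i)}$ according to the position $i$ of the entry $n$, where $Z^{(i)} = \{\pi \in \SAv_n(132,231) : \pi(i) = n\}$. The key structural fact is that a permutation avoiding both $132$ and $231$ cannot have its maximum entry strictly between the first and last positions: if $\pi(i) = n$ with $1 < i < n$, then for any $j < i < k$ the triple $\pi(j),\pi(i),\pi(k)$ is an occurrence of $132$ (if $\pi(j) < \pi(k)$) or of $231$ (if $\pi(j) > \pi(k)$). Hence $Z^{(i)} = \emptyset$ for $2 \le i \le n-1$. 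For $Z^{(n)}$, deleting the entry $n$ gives a bijection onto $\SAv_{n-1}(132,231)$: if $\pi(n) = n$ then $\pi = \sigma \oplus 1$ and $\pi^2 = \sigma^2 \oplus 1$, and an entry equal to the maximum sitting in the last position cannot participate in any occurrence of $132$ or $231$ (in each of those patterns the largest-valued entry is in the middle position); thus $\pi,\pi^2 \in \Av(132,231)$ exactly when $\sigma,\sigma^2 \in \Av(132,231)$, so $|Z^{(n)}| = |\SAv_{n-1}(132,231)|$.

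It remains to show $|Z^{(1)}| = 1$, and this is where the only real work lies. Writing $Z^{(1)}(r) = \{\pi \in Z^{(1)} : \pi(r) = 1\}$, I would first note that $Z^{(1)}(r) \subseteq X^{(1)}(r)$ with the sets $X^{(1)}(r)$ as in the proof of Theorem~\ref{Thm3} (since $\Av(231) \subseteq \Av(3421)$); the analysis of the $X^{(1)}(r)$ carried out there then immediately gives $Z^{(1)}(r) = \emptyset$ for $3 \le r \le n-2$ and $Z^{(1)}(2) \subseteq \{n123\cdots(n-1)\}$. The two surviving indices are handled by a direct check together with a computation of the relevant square: $Z^{(1)}(n) = \{\delta_n\}$ (if $\pi(1) = n$ and $\pi(n) = 1$, then any ascent among $\pi(2),\dots,\pi(n-1)$ would, with the trailing entry $1$, form an occurrence of $231$, forcing $\pi = \delta_n$, and $\delta_n^2 = \ide_n$ avoids everything), while the only candidates for $Z^{(1)}(2)$ and $Z^{(1)}(n-1)$, namely $n123\cdots(n-1)$ and $n(n-2)(n-3)\cdots321(n-1)$, both have squares containing $231$. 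Hence $|Z^{(1)}| = 1$, the recursion follows, and the corollary is proved. The main obstacle is thus purely the bookkeeping for $Z^{(1)}$, and that has already been carried out inside the proof of Theorem~\ref{Thm3}. As a byproduct, the argument identifies $\SAv_n(132,231)$ exactly as $\{\delta_j \oplus \ide_{n-j} : 1 \le j \le n\}$, each member of which squares to $\ide_n$.
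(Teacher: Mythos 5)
Your proposal is correct and follows essentially the same route as the paper: the corollary is extracted from the proof of Theorem~\ref{Thm3} via the decomposition into the sets $Z^{(i)}$, the emptiness of $Z^{(i)}$ for $2\leq i\leq n-1$, the bijection $Z^{(n)}\to\SAv_{n-1}(132,231)$, and the analysis showing $Z^{(1)}=\{\delta_n\}$, yielding the recursion $|\SAv_n(132,231)|=|\SAv_{n-1}(132,231)|+1$ for $n\geq 4$. Your added details (the explicit base-case check at $n=3$ and the identification $\SAv_n(132,231)=\{\delta_j\oplus\ide_{n-j}:1\leq j\leq n\}$) are accurate but do not change the argument.
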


\begin{proof}[Proof of Theorem \ref{Thm4}]
Fix $n\geq 4$. Observe that since $321$ and $3412$ are involutions, the set $\SAv_n(321,3412)$ is closed under taking inverses. Let $X^{(i)}=\{\pi\in\SAv_n(321,3412):\pi(i)=n\}$ and $Y^{(i)}=\{\pi\in\SAv_n(321,3412):\pi(n)=i\}$. The elements of $X^{(i)}$ are precisely the inverses of the elements of $Y^{(i)}$. Suppose $\pi\in X^{(j)}$ for some $j\in\{1,\ldots,n-3\}$. Because $\pi$ avoids $321$, we know that $\pi(j+1)<\pi(j+2)<\cdots<\pi(n)$. Using the assumption that $\pi$ avoids $3412$, one can check that $\pi(\ell)=\ell-1$ for every $\ell\in\{j+2,\ldots,n\}$. It follows that $\pi^2(n-1)=\pi(n-2)<\pi(n-1)=\pi^2(n)=n-2$. The fact that $\pi^2(n-1)<\pi^2(n)<n-1$ forces $\pi^2$ to contain either $321$ or $3412$, which is a contradiction. We conclude that $X^{(1)},\ldots,X^{(n-3)}$ are empty. It follows that $Y^{(1)},\ldots,Y^{(n-3)}$ are also empty. 

Note that $X^{(n)}=Y^{(n)}$. We have \[\SAv_n(321,3412)=X^{(n)}\cup\bigcup_{i,j\in\{n-2,n-1\}}\left(X^{(i)}\cap Y^{(j)}\right).\]
Removing the last entries from the elements of $X^{(n)}$ yields a bijection $X^{(n)}\to\SAv_{n-1}(321,3412)$. Similarly, removing the last two entries from the elements of $X^{(n-1)}\cap Y^{(n-1)}$ yields a bijection $X^{(n-1)}\cap Y^{(n-1)}\to\SAv_{n-2}(321,3412)$. Thus, 
\begin{equation}\label{Eq12}
\big|X^{(n)}\big|=|\SAv_{n-1}(321,3412)|\quad\text{and}\quad \big|X^{(n-1)}\cap Y^{(n-1)}\big|=|\SAv_{n-2}(321,3412)|.
\end{equation} 

Every permutation $\pi\in S_n$ with $\pi(n-2)=n$ and $\pi(n)=n-2$ must contain either $321$ or $3412$. Therefore, $X^{(n-2)}\cap Y^{(n-2)}=\emptyset$. Now suppose $\pi\in X^{(n-1)}\cap Y^{(n-2)}$. We have $\pi^2(n)\leq n-1$. Since $\pi^2(n-1)=n-2$, the entry $n$ must appear to the left of $n-2$ in $\pi^2$. Because the entries $n,n-2,\pi^2(n)$ cannot form a $321$ pattern in $\pi^2$, we must have $\pi^2(n)=n-1$. This means that $\pi(n-2)=n-1$. We deduce that removing the last $3$ entries from the elements of $X^{(n-1)}\cap Y^{(n-2)}$ yields a bijection $X^{(n-1)}\cap Y^{(n-2)}\to\SAv_{n-3}(321,3412)$. The elements of $X^{(n-1)}\cap Y^{(n-2)}$ are simply the inverses of the elements of $X^{(n-2)}\cap Y^{(n-1)}$, so \begin{equation}\label{Eq13}
\big|X^{(n-1)}\cap Y^{(n-2)}\big|=\big|X^{(n-2)}\cap Y^{(n-1)}\big|=|\SAv_{n-3}(321,3412)|.
\end{equation} 
Combining \eqref{Eq12} and \eqref{Eq13} yields the equation \[|\SAv_n(321,3412)|=|\SAv_{n-1}(321,3412)|+|\SAv_{n-2}(321,3412)|+2|\SAv_{n-3}(321,3412)|\] for every $n\geq 4$. The values of $|\SAv_n(321,3412)|$ for $n=1,2,3$ are $1,2,5$. It is now routine to show that\[1+\sum_{n\geq 1}|\SAv_n(321,3412)|x^n=\frac{1}{1-x-x^2-2x^3}. \qedhere\]
\end{proof}

\section{Powerful Pattern Avoidance}\label{Sec:Powerful}
The purpose of this section is to extend our point of view to powerful pattern avoidance as well as pattern avoidance in subgroups of symmetric groups. We first prove Theorem \ref{Thm5}.

\begin{proof}[Proof of Theorem \ref{Thm5}]
Suppose $r\in\Xi(\ide_m)$. This means that there exists a positive integer $n$ and a permutation $\pi\in S_n$ of order $r$ that powerfully avoids $\ide_m$. We must have $n\leq m-1$ because $\pi^n=\ide_n$. There is an injective homomorphism $S_n\hookrightarrow S_{m-1}$, so $r$ is the order of an element of $S_{m-1}$. On the other hand, the order of a permutation in $S_{m-1}$ is in $\Xi(\ide_m)$ since that permutation must powerfully avoid $\ide_m$.

Next, note that $\Xi(231)=\Xi(312)$ because $312$ is the reverse complement of $231$. We clearly have $\{1,2\}\subseteq \Xi(231)$ since both elements of $S_2$ powerfully avoid $231$. Conversely, suppose $r\in\Xi(231)$. This means that there exists a permutation $\pi$ of order $r$ that powerfully avoids $231$. Because $\pi^{-1}$ is a power of $\pi$, the permutation $\pi^{-1}$ avoids $231$. This implies that $\pi$ avoids the inverse of $231$, which is $312$. It is well known \cite{Bona, Linton} that a permutation avoids $231$ and $312$ if and only if it is layered, meaning that it can be written in the form $\delta_{a_1}\oplus\cdots\oplus\delta_{a_t}$ for some positive integers $a_1,\ldots,a_t$. Furthermore, every layered permutation is an involution. This means that $\pi$ is an involution, so $r\in\{1,2\}$. 

Next, suppose $\tau\in S_m$ is not $\ide_m$ and is not the skew sum of two identity permutations. Choose a positive integer $r$, and consider the permutation $234\cdots r1$. This permutation has order $r$, and it powerfully avoids $\tau$ because all of its non-identity powers can be written as the skew sum of two identity permutations. It follows that $\Xi(\tau)=\mathbb N$ in this case. 

We are left to show that if $\tau=\ide_{i}\ominus\ide_j$ for integers $i,j\geq 2$, then $\Xi(\tau)=\mathbb N$. Every such permutation $\tau$ contains the pattern $3412$, so it suffices to show that $\Xi(3412)=\mathbb N$. Fix $n \in \mathbb N$, and let $m = \lfloor n/2 \rfloor$.  Let $\pi \in S_n$ be the cyclic permutation
$$\pi = (m+1)1\,2\cdots(m-1)(m+2)(m+3)\cdots n\,m.$$  
For $k \in [m]$, it is straightforward to see that the $k^{\text{th}}$ power of $\pi$ is the permutation
$$(m + k)(m + k - 1) \cdots (m + 1)1\,2\cdots (m-k)(m + k + 1)(m+k+2)\cdots (n-k+1)m(m-1)\cdots (m-k+1)$$  
and that this permutation avoids $3412$. This shows that $\pi$, $\pi^2$, \dots, $\pi^{m}$ avoid $3412$. Since $3412$ is an involution, it follows that $\pi^{-1},\pi^{-2},\ldots,\pi^{-m}$ also avoid $3412$. This shows that $\pi$ is a permutation of order $n$ that powerfully avoids $3412$, so $n \in \Xi(3412)$.
\end{proof}

We are ultimately interested in determining the sets $\mathcal G(\tau)$, which we introduced in Definition \ref{Def2}, for various patterns $\tau$. We now prove Corollary \ref{Cor2}, which makes some headway on this problem. Recall that a permutation is called sum indecomposable if it cannot be written as $\sigma\oplus\mu$ for two nonempty permutations $\sigma$ and $\mu$. Also, recall that an elementary abelian $2$-group is a group that is isomorphic to a direct product of finitely many copies of the cyclic group $\mathbb Z/2\mathbb Z$. 

\begin{proof}[Proof of Corollary \ref{Cor2}]
Our first observation in that if $\tau$ is sum indecomposable, then $\mathcal G(\tau)$ is closed under taking direct products. To see this, suppose $G_1$ and $G_2$ are in $\mathcal G(\tau)$. This means that there are positive integers $n_1,n_2$ and injective homomorphisms $\varphi_i:G_i\hookrightarrow S_{n_i}$ for $i\in\{1,2\}$ such that the elements of $\varphi_1(G_1)$ and the elements of $\varphi_2(G_2)$ avoid $\tau$. Consider the map $\varphi_1\oplus\varphi_2:G_1\times G_2\to S_{n_1+n_2}$ defined by $(\varphi_1\oplus\varphi_2)(x_1,x_2)=\varphi_1(x_1)\oplus\varphi_2(x_2)$. It is straightforward to check that $\varphi_1\oplus\varphi_2$ is an injective homomorphism. For any $x_1\in G_1$ and $x_2\in G_2$, the permutation $(\varphi_1\oplus\varphi_2)(x_1,x_2)$ avoids $\tau$. This is because $\tau$ is sum indecomposable and because $\varphi_1(x_1)$ and $\varphi_2(x_2)$ both avoid $\tau$.  

If $G$ is a group and $\varphi:G\hookrightarrow S_n$ is an injective homomorphism such that $\varphi(G)\subseteq\Av_n(231)$, then the map $\psi:G\to S_n$ given by $\psi(x)=\delta_n \varphi(x)\delta_n^{-1}$ is an injective homomorphism with $\psi(G)\subseteq\Av_n(312)$. This shows that $\mathcal G(231)\subseteq\mathcal G(312)$; the proof of the reverse containment is similar. We want to show that $\mathcal G(231)$ is the set of elementary abelian $2$-groups. The group $\mathbb Z/2\mathbb Z$ is clearly in $\mathcal G(231)$. Since $231$ is sum indecomposable, it follows from the preceding paragraph that every elementary abelian $2$-group is in $\mathcal G(231)$. On the other hand, suppose $G\in\mathcal G(231)$. It follows from Theorem \ref{Thm5} that every element of $G$ is an involution. It is well known that every finite group whose elements are all involutions is an elementary abelian $2$-group.   

Finally, suppose $m\geq 2$ and $\tau\in S_m\setminus\{m123\cdots(m-1),234\cdots m1\}$ is sum indecomposable. Theorem \ref{Thm5} tells us that $\mathcal G(\tau)$ contains every finite cyclic group. Because $\mathcal G(\tau)$ is closed under taking direct products, it follows from the Fundamental Theorem of Finitely Generated Abelian Groups that $\mathcal G(\tau)$ contains all finite abelian groups. 
\end{proof}

\section{Concluding Remarks and Open Problems}\label{Sec:Conclusion} 
In Section \ref{Sec:Long}, we proved Theorem \ref{Thm1} by constructing several permutations in $\SAv_{k^3}(\ide_{k+1})$ for every positive integer $k$. It would be interesting to gain more structural and enumerative information about the permutations in these sets. The elements we constructed all have order $4$ when $k\geq 2$, but there are also elements of $S_{k^3}(\ide_{k+1})$ of order $3$. For example, let $\zeta_{k,j}$ denote the word $(k^2-j)(2k^2-j)(3k^2-j)\cdots(k^3-j)$, and consider the concatenation $\zeta_k = \zeta_{k,0}\zeta_{k,1}\cdots\zeta_{k,k^2-1}$ forming a permutation in $\Av_{k^3}(\ide_{k+1})$. It is straightforward to verify that $\zeta_k^2$ is the permutation $\eta_k = \eta_{k,k-1}\cdots\eta_{k,0} \in \Av_{k^3}(\ide_{k+1})$, where $\eta_{k,j}$ denotes the word \[(k^3 - j)(k^3 - j - k)(k^3 - j - 2k)\cdots (k^3 - j - (k^2-1)k),\] and furthermore that $\zeta_k \circ \eta_k = \ide_{k^3}$.  Hence $\zeta_k$ and $\eta_k$ are both permutations of order $3$ in $S_{k^3}(\ide_{k+1})$. There are also elements of $\SAv_{8}(123)$ of order $12$; one such permutation is $53827614$. We are led naturally to the following question.  

\begin{question}
Does there exist a permutation $\pi\in\SAv_{k^3}(\ide_{k+1})$ of order greater than $4$ for every $k\geq 2$? More generally, what are the orders of the elements of $\SAv_{k^3}(\ide_{k+1})$? 
\end{question}

In light of Theorem \ref{Thm2}, it seems natural to investigate the sets $\Omega_n^T(\tau)$ for other patterns $\tau$ and other sets $T\subseteq\mathbb N$. There is also still much to be done in terms of enumerating sets of permutations that strongly avoid certain patterns. For instance, we have the following conjecture arising from numerical data. Let $F_n$ be the $n^\text{th}$ Fibonacci number, where we use the initial conditions $F_1=F_2=1$. 

\begin{conjecture}
For every positive integer $n$, we have $|\SAv_n(321,1342)|=2F_{n+2}-n-2$. 
\end{conjecture}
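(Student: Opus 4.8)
The plan is to mimic the structure of the proof of Theorem \ref{Thm4}: fix $n\geq 4$, stratify $\SAv_n(321,1342)$ according to where the value $n$ sits (and possibly where $n$ lands in the square), and peel off a bounded number of ``top'' entries to set up a linear recurrence with constant coefficients. Concretely, write $\SAv_n(321,1342)=\bigcup_{i=1}^n X^{(i)}$ with $X^{(i)}=\{\pi\in\SAv_n(321,1342):\pi(i)=n\}$. Since $\pi$ avoids $321$, everything after position $i$ is increasing; since $\pi$ avoids $1342$, I expect (after a short case analysis on $\pi^2$, as in Theorems \ref{Thm3} and \ref{Thm4}) that $X^{(i)}=\emptyset$ unless $i$ is within a bounded distance of $n$, and that in the surviving cases removing the last one, two, or three entries gives a bijection onto $\SAv_{n-1}$, $\SAv_{n-2}$, or $\SAv_{n-3}$ with the same pair of patterns, together with a polynomial ``error term'' in $n$ coming from a few explicitly describable sporadic permutations. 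This should produce a recurrence of the shape $|\SAv_n(321,1342)|=|\SAv_{n-1}(321,1342)|+|\SAv_{n-2}(321,1342)|+c\cdot n+d$ for $n$ large, which one checks against the conjectured closed form $2F_{n+2}-n-2$ using the Fibonacci recurrence $F_{n+2}=F_{n+1}+F_n$ and the identity $2F_{n+2}-n-2 = (2F_{n+1}-(n-1)-2)+(2F_n-(n-2)-2)+n+c'$ for the appropriate constants, then verifies the base cases $n=1,2,3,\dots$ directly.

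First I would work out the structure of a permutation $\pi$ avoiding $321$ and $1342$: it is a merge of two increasing sequences, and the $1342$ restriction controls how ``high'' entries can appear early. A useful preliminary is to determine $|\SAv_n(321,1342,\cdot)|$ for one or two auxiliary pattern sets that show up when analyzing the squares, exactly as Corollary \ref{Cor1} ($|\SAv_n(132,231)|=n$) was needed inside the proof of Theorem \ref{Thm3}; I anticipate needing something like the count of permutations in $\Av_n(321,1342)$ that additionally start or end in a prescribed small way, and these auxiliary counts should themselves satisfy trivial recurrences. The Fibonacci shape of the answer strongly suggests that the ``main'' recursive piece is $|\SAv_{n-1}|+|\SAv_{n-2}|$ with no doubled $\SAv_{n-3}$ term surviving (unlike Theorem \ref{Thm4}), the $2F_{n+2}$ being the homogeneous solution and the $-n-2$ being a particular solution forced by a linear inhomogeneity; this is a good consistency check on the case analysis, since if I ever get a coefficient other than $1$ on the $\SAv_{n-2}$ term I have made an error.

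The main obstacle will be the bookkeeping in the square: given $\pi\in\SAv_n(321,1342)$ with $\pi$ decomposed around the position of $n$, one must verify that $\pi^2$ avoids both $321$ and $1342$, and conversely that the recursive building blocks, when assembled, always yield a permutation whose square still avoids these patterns. The pattern $1342$ is not an involution, so unlike in Theorem \ref{Thm4} I cannot freely pass to inverses; I would need to handle $1342$ and its inverse (which is $1423$) separately, or note that $\SAv_n(321,1342)$ is \emph{not} closed under inversion and instead argue directly. The trickiest subcase is likely the one where $n$ sits near the front, say $\pi(1)=n$ or $\pi(2)=n$: then $\pi^2$ can have a large value pushed into an early position, and one must check carefully that the $1342$ obstruction in $\pi^2$ forces the remaining entries into an increasing run, collapsing that subcase to a single permutation (or a $\SAv_{n-3}$-indexed family). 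Once every subcase is resolved into ``$\SAv_{n-1}$-family'', ``$\SAv_{n-2}$-family'', ``$\SAv_{n-3}$-family'', or ``$O(1)$ sporadic permutations depending on $n$'', summing the contributions gives the recurrence, and an easy induction closes the argument.
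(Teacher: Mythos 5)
There is a fundamental issue to flag first: this statement is left \emph{open} in the paper. The authors present it only as a conjecture supported by numerical data, so there is no proof in the paper against which to measure your attempt, and any correct argument you supply would be new mathematics rather than a reconstruction.

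Measured as a proof, your proposal has a genuine gap: it is a research plan, not an argument. Every decisive step is deferred with ``I expect,'' ``should produce,'' ``I anticipate,'' or ``is likely.'' The entire content of a proof in the style of Theorem \ref{Thm4} lies in actually carrying out the case analysis --- showing which sets $X^{(i)}$ are empty, exhibiting the bijections onto $\SAv_{n-1}$, $\SAv_{n-2}$, etc., identifying the sporadic permutations exactly, and (hardest of all) verifying in each case that the \emph{square} of every assembled permutation avoids both $321$ and $1342$. None of this is done. Your consistency check that $a_n=2F_{n+2}-n-2$ satisfies $a_n=a_{n-1}+a_{n-2}+n-1$ is correct and is a reasonable sanity check, but it only shows the closed form is compatible with a recurrence of that shape; it does not show the counting sequence satisfies any such recurrence. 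You also correctly identify two genuine obstacles --- that $1342$ is not an involution, so the set is not closed under inversion and the $X^{(i)}$/$Y^{(j)}$ symmetry exploited in Theorem \ref{Thm4} is unavailable, and that the subcases with $n$ near the front are delicate --- but identifying an obstacle is not the same as overcoming it. As it stands, the proposal establishes nothing beyond what the paper already records as numerical evidence; to turn it into a proof you would need to execute the structural analysis of $\Av_n(321,1342)$ and its squares in full, and it is entirely possible that the analysis does not close up into a constant-coefficient recurrence with a linear inhomogeneity at all.
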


We have only scratched the surface in the study of powerful pattern avoidance and pattern avoidance in subgroups of symmetric groups. For example, Theorem \ref{Thm5} classifies the permutations that powerfully avoid $231$ (alternatively, $312$). We would like to understand the sizes of the sets $\PAv_n(132)$ and $\PAv_n(321)$. Numerical evidence suggests the following conjecture.\footnote{None of the three asymptotic equivalences implied by this conjecture are known, so it would be interesting to prove any one of them if not all of them.} 
\begin{conjecture}
We have \[\big|\Omega_n^{\{1,2,3\}}(132)\big|\sim|\PAv_n(132)|\sim|\SAv_{n-1}(132)|.\]
\end{conjecture}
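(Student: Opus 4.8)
The plan is to fix the pattern $132$ throughout and to isolate a common ``core'' shared by $\Omega_n^{\{1,2,3\}}(132)$ and $\PAv_n(132)$, reducing the first two of the three implied equivalences to a pair of negligibility estimates and leaving the third as a separate structural comparison. First I would record the elementary facts. A permutation of order $1$ or $2$ that avoids $132$ automatically powerfully avoids $132$, since its only nontrivial power is itself; these are exactly the $132$-avoiding involutions, of which there are $\binom{n}{\lfloor n/2\rfloor}$ by the Simion--Schmidt enumeration \cite{Simion}. Write $I_n$ for this set. Let $D_n$ be the set of order-$3$ permutations $\pi$ with $\pi,\pi^2\in\Av(132)$, let $A_n$ be the set of order-$3$ permutations with $\pi\in\Av(132)$ but $\pi^2\notin\Av(132)$, and let $B_n=\{\pi\in\PAv_n(132):\pi\text{ has order }\ge 4\}$. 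Since an order-$3$ permutation lies in $\PAv_n(132)$ precisely when $\pi$ and $\pi^2$ avoid $132$, one obtains the disjoint decompositions
\[\Omega_n^{\{1,2,3\}}(132)=I_n\sqcup D_n\sqcup A_n\qquad\text{and}\qquad \PAv_n(132)=I_n\sqcup D_n\sqcup B_n.\]
As $I_n\sqcup D_n$ is common to both, $\big|\Omega_n^{\{1,2,3\}}(132)\big|-\big|\PAv_n(132)\big|=|A_n|-|B_n|$, and the equivalence $\big|\Omega_n^{\{1,2,3\}}(132)\big|\sim\big|\PAv_n(132)\big|$ reduces to showing $|A_n|=o\big(\big|\PAv_n(132)\big|\big)$ and $|B_n|=o\big(\big|\PAv_n(132)\big|\big)$.

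I would attack $|A_n|$ through the structure theory of order-$3$ pattern avoiders developed in Section~\ref{Sec:Order3}: an order-$3$ permutation is a disjoint union of fixed points and $3$-cycles, each $3$-cycle contributing three plotted points, and $132$-avoidance forces these points into rigid local configurations, exactly as in the $231$/$312$ analysis of Theorem~\ref{Thm2}. The event ``$\pi^2$ contains $132$'' then translates into a forbidden interaction between two cycles (or a cycle and a fixed point), and I expect that imposing it costs a constant factor per unit length, so that $A_n$ is of strictly smaller exponential order than the core $I_n\sqcup D_n$. For $B_n$ the task is to bound powerful avoiders of order $\ge 4$. Here I would aim for a rigidity statement: a permutation all of whose powers avoid $132$ and whose order is large must be assembled, up to direct sums, from cyclic ``shift'' blocks $\ide_a\ominus\ide_b$ of the kind produced by $234\cdots r1$ in the proof of Theorem~\ref{Thm5}, leaving only subexponentially many degrees of freedom. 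Together these two estimates would give $\big|\Omega_n^{\{1,2,3\}}(132)\big|\sim\big|\PAv_n(132)\big|\sim|I_n|+|D_n|$.

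The remaining and hardest equivalence is $\big|\PAv_n(132)\big|\sim\big|\SAv_{n-1}(132)\big|$. The one clean tool is the ``append a maximum'' map $\pi\mapsto\pi\oplus1$, which satisfies $(\pi\oplus1)^k=\pi^k\oplus1$ and cannot create a $132$ pattern (the appended entry is globally largest and last, so it can never play the ``large'' middle role of a $132$), whence it injects $\SAv_{n-1}(132)\hookrightarrow\SAv_n(132)$ and $\PAv_{n-1}(132)\hookrightarrow\PAv_n(132)$ while preserving order. A useful auxiliary observation is that $132$ is its own inverse as a pattern, so $\pi^{-1}$ avoids $132$ whenever $\pi$ does; in particular every order-$4$ strong avoider is automatically powerful, so the strong avoiders that fail to be powerful have order at least $5$, and $\SAv_n(132)=\PAv_n(132)\sqcup C_n$ with $C_n$ supported on orders $\ge 5$. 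To connect $\PAv_n$ with $\SAv_{n-1}$ I would search for a reversible ``delete a distinguished entry'' operation producing the size shift by one—peeling off the cycle containing a canonically chosen entry and reading off a strong avoider on the remaining $n-1$ points—rather than hoping for an exact bijection.

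The main obstacle is precisely the control of powers under $132$-avoidance. Both the bound on $B_n$ and the comparison with $\SAv_{n-1}$ demand a structural and enumerative understanding of $\SAv_n(132)$ and $\PAv_n(132)$, whose exact enumeration B\'ona and Smith explicitly left open; unlike the single-step recursion available for Theorem~\ref{Thm2}, squaring does not respect the skew-sum decomposition $\pi=(\alpha\oplus1)\ominus\beta$ of a $132$-avoider, so one cannot simply peel from the maximum. I would therefore treat a generating-function or transfer-matrix description of $\SAv_n(132)$—tracking how the map $\mu\mapsto\mu^2$ acts on the recursive structure of $132$-avoiders—as the crux. Once its growth constant and the limiting proportion of strong avoiders that are powerful (equivalently, the order of $C_n$) are pinned down, all three asymptotic equivalences should follow from the decompositions above together with the negligibility of $A_n$ and $B_n$.
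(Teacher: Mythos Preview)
This statement is listed in the paper as an open \emph{conjecture}, not a theorem; the paper offers no proof, and the accompanying footnote explicitly says that \emph{none} of the three asymptotic equivalences are known. There is therefore no ``paper's proof'' to compare against, and any genuine proof would settle an open problem.

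Your proposal is not a proof but a research outline, and you essentially acknowledge as much in the final paragraph. The decompositions $\Omega_n^{\{1,2,3\}}(132)=I_n\sqcup D_n\sqcup A_n$ and $\PAv_n(132)=I_n\sqcup D_n\sqcup B_n$ are correct and a reasonable way to frame the first equivalence, and the observation that $132$ is an involution (so that $\pi\in\Av(132)\Rightarrow\pi^{-1}\in\Av(132)$) is genuinely useful. But the crucial steps are all deferred: the bound $|A_n|=o(|\PAv_n(132)|)$ is supported only by ``I expect that imposing it costs a constant factor per unit length''; the bound $|B_n|=o(|\PAv_n(132)|)$ rests on an unproved rigidity heuristic; and for $|\PAv_n(132)|\sim|\SAv_{n-1}(132)|$ you propose to ``search for'' a suitable deletion map without exhibiting one. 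These are exactly the obstacles that make the statement a conjecture rather than a theorem.

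One small correction: your claim that the non-powerful strong avoiders $C_n=\SAv_n(132)\setminus\PAv_n(132)$ are supported on orders $\ge 5$ undershoots what your own observation gives. If $\pi$ strongly avoids $132$ then $\pi,\pi^2,\pi^{-1},\pi^{-2}$ all avoid $132$, so for any order $r\le 5$ every nontrivial power lies in $\{\pi^{\pm 1},\pi^{\pm 2}\}$ and $\pi$ is already powerful; the first order at which a strong avoider can fail to be powerful is $r=6$ (via $\pi^3$). This sharpens your decomposition but does not, of course, supply the missing asymptotics.
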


The previous conjecture relates to some remarks at the end of the article by B\'ona and Smith \cite{BonaSquares}, who observed that $|\SAv_n(132)|\geq|\PAv_n(132)|\geq 2^{n(1+o(1))}$. They also indicated that the equality $|\SAv_n(132)|=2^{n(1+o(1))}$ might hold. Furthermore, they showed that $|\SAv_n(321)|\geq |\PAv_n(321)|\geq 2.3247^{\hspace{.02cm}n(1+o(1))}$. It is not clear what the correct asymptotics for $|\SAv_n(321)|$ and $|\PAv_n(321)|$ are. 

Theorem \ref{Thm5} naturally leads us to ask the following question. 

\begin{question}
What is $\Xi(2341)$? 
\end{question} 

We also have the following conjecture. Note that the conjecture is trivial when $t=1$ and that it follows from Theorem \ref{Thm5} when $t=2$.   

\begin{conjecture}
For every positive integer $t$, the set $\Xi(\ide_t\ominus 1)$ is finite. 
\end{conjecture}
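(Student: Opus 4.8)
The plan is to reduce the conjecture to a bound on cycle lengths. Write $\tau:=\ide_t\ominus 1=23\cdots(t+1)1$. If $\pi$ powerfully avoids $\tau$ and $C$ is a cycle of $\pi$, then $C$ is invariant under every power of $\pi$, so the pattern $\pi|_C$ formed by the entries in positions $C$ satisfies $(\pi|_C)^k=\pi^k|_C$; since any restriction of a $\tau$-avoider avoids $\tau$, the $|C|$-cycle $\pi|_C$ again powerfully avoids $\tau$. As $\mathrm{ord}(\pi)$ is the least common multiple of its cycle lengths, it suffices to produce, for each $t$, a bound $f(t)$ such that every $N$-cycle powerfully avoiding $\tau$ has $N\le f(t)$: this forces $\mathrm{ord}(\pi)\mid\operatorname{lcm}(1,\dots,f(t))$ for all such $\pi$, so $\Xi(\tau)$ is finite.

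Next I would record two elementary facts about how $\tau$ meets direct and skew sums: $\alpha_1\oplus\cdots\oplus\alpha_k$ avoids $\tau$ iff each $\alpha_i$ does; and — because $\tau$ places its unique minimum entry last — $\alpha_1\ominus\cdots\ominus\alpha_k$ avoids $\tau$ iff $\alpha_1,\dots,\alpha_{k-1}$ avoid $\ide_t$ and $\alpha_k$ avoids $\tau$. The base cases then follow: for $t=1$, $\tau=21$ and an $N$-cycle with $N\ge 2$ already contains $21$, so $f(1)=1$; for $t=2$, since $\pi^{-1}$ is a power of $\pi$ the permutation $\pi$ avoids $231$ and its inverse $312$, hence is layered, hence an involution, so $f(2)=2$.

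For general $t$ I would exploit many powers at once. Let $\sigma$ be an $N$-cycle powerfully avoiding $\tau$. As $k$ runs over $0,1,\dots,N-1$ the values $\sigma^k(N)$ run over the full $\sigma$-orbit of $N$, so there is a unique $k^\#$ with $\sigma^{k^\#}(N)=1$; for that power, $\sigma^{k^\#}=\bar P\ominus 1$ with $\bar P\in S_{N-1}$, and the skew-sum fact forces $\bar P$ to avoid $\ide_t$ — so $\sigma^{k^\#}$ has no increasing subsequence of length $t$ and contains a decreasing subsequence of length at least $\lceil(N-1)/(t-1)\rceil$; likewise $\sigma^{-k^\#}=1\ominus\bar P^{-1}$ avoids $\ide_t$. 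One then wants to iterate — restrict $\sigma^{k^\#}$ to one of its cycles to get a shorter cycle again powerfully avoiding $\tau$, run the same extraction — and combine these constraints across the whole family $\{\sigma^k\}$ to pin $N$ down. A possibly more convenient formulation encodes $\sigma$ by a total order $\prec$ on $\mathbb Z/N\mathbb Z$ (via $\sigma=\rho s\rho^{-1}$ with $s$ the standard shift), under which powerful avoidance of $\tau$ says exactly that for every nonzero shift $k$ the order $\prec$ contains no chain $y_1\prec\cdots\prec y_{t+1}$ with $y_{t+1}+k\prec y_1+k\prec\cdots\prec y_t+k$; for $t=1$ this is just translation invariance of $\prec$, impossible for $N\ge 2$, and the general target is a quantitative ``almost translation invariant implies small $N$'' statement.

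The step I expect to be the main obstacle is precisely this last one: closing the loop for $t\ge 3$. Powerful avoidance of $\tau$ is not witnessed by any fixed finite collection of powers — a long permutation can avoid $\tau$ in many of its powers (for instance the increasing oscillations avoid $2341$, though their squares do not) — so one must genuinely use the arithmetic interaction of all the shifts $s^k$ with $\prec$, or track a quantity along the iteration above that is forced strictly to decrease. Moreover $\Av(\tau)$ is not simple-finite for $t\ge 3$, so the substitution-decomposition machinery does not settle the problem outright, and even $\Xi(2341)$ is presently unknown; I would therefore expect $f(t)$ to be non-explicit, arising from an extremal or compactness argument rather than from an explicit extremal family.
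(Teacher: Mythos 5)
The statement you are asked to prove is one of the paper's \emph{open conjectures}: the authors give no proof of it, and they explicitly note only the two cases you also settle ($t=1$ trivially, $t=2$ via Theorem \ref{Thm5}). Your proposal does not prove it either, and to your credit you say so. What you do establish is correct and sensible: the restriction of a powerful $\tau$-avoider to an orbit of itself is again a powerful $\tau$-avoider, so finiteness of $\Xi(\ide_t\ominus 1)$ reduces to bounding the length of a single cycle that powerfully avoids $\ide_t\ominus 1$; the sum/skew-sum avoidance criteria for $\tau=23\cdots(t+1)1$ are right (the skew-sum one because $\tau$ places its minimum last and its first $t$ entries form $\ide_t$); and the extraction of the power $\sigma^{k^\#}=\bar P\ominus 1$ with $\bar P$ avoiding $\ide_t$ is a genuine, nontrivial constraint on $\sigma$.

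The gap is the entire case $t\geq 3$, which is the whole content of the conjecture. Your plan to ``iterate and combine constraints across all powers'' is a hope, not an argument: nothing in the proposal forces the iteration to terminate or produces the bound $f(t)$, and you correctly identify why the obvious attacks fail (powerful avoidance is not certified by any fixed finite set of powers, and the substitution decomposition does not apply since $\Av(\ide_t\ominus 1)$ contains infinitely many simple permutations for $t\geq 3$). Note in particular that $\ide_3\ominus 1=2341$, so even the $t=3$ instance of your missing step subsumes the paper's separate open Question asking to determine $\Xi(2341)$. In short: the reduction and base cases are fine, but the conjecture remains unproven both in your proposal and in the paper.
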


Determining $\Xi(\tau)$ is equivalent to finding the set of cyclic groups in $\mathcal G(\tau)$. Ideally, we would like to know what the sets $\mathcal G(\tau)$ are in general. However, if this turns out to be too difficult, it would still be very interesting to answer the following question. Note that Corollary \ref{Cor2} answers this question for some patterns $\tau$. 

\begin{question}
Given a permutation pattern $\tau$, what is the set of all abelian groups in $\mathcal G(\tau)$? 
\end{question}

We saw in the proof of Corollary \ref{Cor2} that if $\tau$ is sum indecomposable, then $\mathcal G(\tau)$ is closed under taking direct products. Can we find other ways of constructing groups in $\mathcal G(\tau)$? 

\section{Acknowledgments}\label{Sec:Acknowledgments}
We thank Maya Sankar for a helpful discussion that led us to consider the sets $\mathcal G(\tau)$. We also thank Joe Gallian for hosting us at the University of Minnesota Duluth, where much of this research was conducted with partial support from NSF/DMS grant 1659047 and NSA grant H98230-18-1-0010. We thank the anonymous referee for helpful comments that improved the presentation of this article. The second author was supported by a Fannie and John Hertz Foundation Fellowship and an NSF Graduate Research Fellowship.

\end{document}